\documentclass[dvipsnames,11pt,reqno]{amsart}

\usepackage{amssymb,mathtools,amsthm}
\usepackage[a4paper,left=3cm,right=3cm,top=3cm,bottom=3cm]{geometry}
\usepackage[T1]{fontenc}
\usepackage[utf8]{inputenc}
\usepackage{microtype}
\usepackage{enumitem}

\mathtoolsset{showonlyrefs}

\newcounter{n}
\numberwithin{n}{section}
\theoremstyle{plain}
\newtheorem{lemma}[n]{Lemma}
\newtheorem{theorem}[n]{Theorem}
\newtheorem{proposition}[n]{Proposition}

\theoremstyle{definition}
\newtheorem{definition}[n]{Definition}
\newtheorem{remark}[n]{Remark}
\newtheorem*{claim*}{Claim}
\newtheorem*{proposition*}{Proposition}

\usepackage{tikz}
\usepackage{subcaption}

\usepackage{hyperref}
\definecolor{colorlinks}{RGB}{0, 24, 168}
\definecolor{colorcites}{RGB}{124, 10, 2}
\hypersetup{
    colorlinks=true,
    linkcolor=colorlinks,
    citecolor=colorcites,
    urlcolor=colorlinks,
    pdfborder={0 0 0}
}

\renewcommand\phi\varphi
\renewcommand\epsilon\varepsilon
\renewcommand\setminus\smallsetminus

\newcommand\E{\mathbb E}

\newcommand\R{\mathbb R}

\newcommand\Z{\mathbb Z}
\newcommand\V{\mathbb V}

\newcommand\calT{\mathcal T}
\newcommand\calF{\mathcal F}

\newcommand\calP{\mathcal P}

\newcommand\blank{\,\cdot\,}

\newcommand\Hom{\operatorname{Hom}}
\newcommand\Mon{\operatorname{Mon}}
\newcommand\tree{\mathbb{T}}
\newcommand\Var{\operatorname{Var}}
\newcommand\Geom{\operatorname{Geom}}

\begin{document}

\makeatletter
\@namedef{subjclassname@2020}{\textup{2020} Mathematics Subject Classification}
\makeatother

\title{Height function localisation on trees}
\subjclass[2020]{Primary 82B41; secondary 82B30}
\author{Piet Lammers}
\address{Institut des Hautes \'Etudes Scientifiques}
\email{lammers@ihes.fr}
\author{Fabio Toninelli}
\address{Technische Universit\"at Wien}
\email{fabio.toninelli@tuwien.ac.at}
\keywords{Localisation, height functions, trees, statistical mechanics}

\begin{abstract}
We study two models of \emph{discrete height functions}, that is,
models of random integer-valued functions on the vertices of a tree.
First, we consider the \emph{random homomorphism model}, in which
neighbours must have a height difference of exactly one.  The local
law is uniform by definition.  We prove that the height variance of
this model is bounded, uniformly over all boundary conditions
(both in terms of location and boundary heights).
This implies a strong notion of localisation,
uniformly over all extremal Gibbs measures of the system.
For the
second model, we consider directed trees, in which each vertex has
exactly one parent and at least two children.  We consider the locally
uniform law on height functions which are \emph{monotone}, that is,
such that the height of the parent vertex is always at least the height
of the child vertex.  We provide a complete classification of all
extremal gradient Gibbs measures, and describe exactly the
localisation-delocalisation transition for this model.  Typical
extremal gradient Gibbs measures are localised also in this case.
Localisation in both models is consistent with the observation that the Gaussian free field
is localised on trees, which is an immediate consequence of transience of the
random walk.

\end{abstract}

\maketitle

\setcounter{tocdepth}{1}
\tableofcontents


\section{Introduction}

\subsection{Height functions as lattice models}

Height functions, in the broadest sense,
are random real- or integer-valued functions on the vertices of a connected
graph $G=(V,E)$.
Let us say for now that $G$ is finite.
Each edge $xy\in E$ has an associated  potential function $V_{xy}$ (often assumed to be convex, symmetric,
and satisfying $V_{xy}(a)\to\infty$ as $|a|\to\infty$)
and the density of a configuration is proportional to
\begin{equation}
\label{eq:gradH}
    \exp\left[-H(h)\right];\quad H(h):=\sum_{xy\in E}V_{xy}(h(y)-h(x)).
\end{equation}
This well-defines the distribution of the \emph{gradient} of the
height function $h$.  We informally think of this construction as follows: each
edge has a spring associated to it, which pulls the heights at its two
endpoints together. The value $V_{xy}(a)$ is then the energy needed to impose the difference $h(y)-h(x)=a$.
One may also introduce boundary conditions by fixing the value of the height on a certain subset $\partial V\subset V$, seen as the ``boundary''. This way, we obtain a well-defined probability distribution of the height profile $h=(h(x))_{x\in V}$ and not just a distribution of height gradients.
One can also make sense of height functions on infinite connected
graphs, through the standard Dobrushin-Lanford-Ruelle formalism
detailed below.

Height functions arise as \emph{lattice models}
once we choose the role of the graph $G$ to be played by the square lattice 
graph $(\Z^d,\E)$ or a variant thereof.
There is a range of height function models which are each of great physical 
importance: examples include
the \emph{dimer models} (which arise from random tilings), the \emph{six-vertex model}
(which is a model of random arrows and relates to \emph{Fortuin-Kasteleyn percolation}), and
the \emph{discrete Gaussian free field} (DGFF) (which has continuous heights and quadratic potential functions, and is also known as the \emph{harmonic crystal}).
Height functions with convex potential functions on lattices were studied in great generality 
in the work of Sheffield~\cite{SHEFFIELD}.

A very intriguing issue for height functions is the so-called
\emph{localisation-delocalisation transition}.  \emph{Localisation}
essentially means that, for any given $x\in V$, the variance
$\Var[h(x)]$ remains bounded uniformly as boundary conditions are
taken further and further away; \emph{delocalisation} means that the
variance grows infinite.  In dimension $d=1$, all measures are
delocalised, with a full description of the scaling limit in terms of
Brownian motion.  In dimension $d=2$, measures are expected to either
localise, or to delocalise with the Gaussian free field as their scaling limit.
Real-valued models should always be delocalised (quite general results
are discussed in~\cite{velenik2006localization,milos2015delocalization}).
Integer-valued models in dimension $d=2$ are known to localise at low
temperature due to the Peierls argument~\cite{BW}, and they
are expected to delocalise above the temperature of the roughening
transition $T_R$.
There exist many results for specific models in
this direction~\cite{frohlich1981kosterlitz,kenyon2001dominos,giuliani2020non,duminil2020macroscopic,glazman2021uniform,duminil2020delocalization,lammers2022height,lammers2021delocalisation,aizenman2021depinning}, but a general theory of the roughening
transition is still lacking.  In dimension $d\geq 3$, all reasonable height
functions are expected to localise at any temperature.  The heuristics
behind this is that, in the harmonic case $V_{xy}(a)=a^2$ (which
corresponds to the DGFF on $G$), the
variance of $h(x)$ is simply the Green's function $G(x,x)$ of the simple
random walk on $\mathbb Z^d$, which is finite for $d\ge3$ due to
transience. This has been proved rigorously for the discrete Gaussian
model~\cite{Frohlich+Simon+Spencer-1976,Frohlich+Israel+Lieb+Simon-1978} and the
solid-on-solid model~\cite{Bricmont+Fontaine+Lebowitz-1982} in dimension three
and higher, as well as for the uniformly random Lipschitz function in
sufficiently high dimension~\cite{peled2017high}. Our work focuses on
the case where $G$ is a tree. Since trees correspond, in a sense,
to a high-dimensional setting, one typically expects localisation.
Partial results in this
direction, for special boundary conditions, already exist in the
literature (see Subsection~\ref{sec:lit}).

\subsection{Height functions on trees: an apparent contradiction}
\label{sec:appcontr}
If $G$ is a tree, a description of the distribution \eqref{eq:gradH}
is particularly easy (and the construction can be carried out directly
on infinite trees): in that case, the height increments on the edges
are independent, with the increment $h(y)-h(x)$ having a density
proportional to $\exp[-V_{xy}]$.  If all potential functions are the
same, then the measure \eqref{eq:gradH} has the property that the
variance of $h(y)-h(x)$ is proportional to the graph distance from $x$
to $y$.  Thus, we might think that the variance of $h(x)$ blows up as
we put boundary conditions further and further away: the measure would
then be \emph{delocalised}.  On the other hand, we mentioned above
that one expects localisation when the random walk on $G$ is
transient: since this is the case on trees with degree strictly larger
than two, one expects that the variance should be uniformly bounded: the
model would be \emph{localised}.  One purpose of this article is to
address this apparent contradiction.  In fact, we will study a
(not-exactly-solvable) model of integer-valued heights on
trees, and we will show that the model is localised in a strong
sense: the height variance is bounded, uniformly on the choice of the
vertex and (for finite trees) on boundary conditions. For infinite
trees, localisation holds for any extremal Gibbs measure. In
particular, the Gibbs measure described above, with i.i.d.\ gradients on
edges, is \emph{not} extremal; see Remark \ref{rem:nocontradiction}
below.
The notion of extremality really provides the key to resolving the apparent contradiction,
and is closely related to the \emph{reconstructability} of the height distribution given a single sample
(see for example~\cite[Theorem~7.12]{G11} or~\cite[Lemma~3.2.3, Statement~1]{SHEFFIELD}).
We conjecture that some ideas apply in great generality: we
expect that, under a strict convexity assumption on the potential, all
height functions on trees are uniformly localised, that is, the
variance is bounded uniformly over the distance to the boundary and
the choice of boundary conditions.
For real-valued height functions the analogous 
uniform localisation result is an immediate corollary of the Brascamp-Lieb inequality.
We will mention exactly
the parts in the proof that are missing for general potentials
in the integer-valued setting.

\subsection{Models under consideration and relations to previous works}
\label{sec:lit}

The first model under consideration in this article is the model of graph homomorphisms,
in which the height at neighbours must always differ by exactly one.
The local law is uniform.
The model of graph homomorphisms has been considered by several mathematicians,
and in particular also on non-lattice graphs.
General, rudimentary inequalities were obtained in~\cite{benjamini2000random}:
for example, it is proven that the variance of the height difference grows at most linearly in the distance between vertices.
That article also studies the behaviour of the model on a tree under zero boundary conditions at the leaves.
Further bounds on the \emph{maximum} of the random graph homomorphism are derived in~\cite{benjamini2000upper},
by an analogy with the DGFF.
The same problem is revisited in~\cite{benjamini2007random},
which, in particular, relates the bounds on the maximum to the degree of the graph
(in relation to its size).
These first three papers consider height functions in the very first sense presented in the introduction:
on finite graphs, and without boundary.
In two later articles, Peled, Samotij, and Yehudayoff~\cite{peled2013grounded,peled2013lipschitz} consider uniformly random $K$-Lipschitz functions 
on a tree, subjected to zero boundary conditions on a designated boundary set $\partial V\subset V$.
In either case, the authors obtain extremely tight bounds on the pointwise fluctuations of the random integer-valued functions.

Our presentation differs from the results quoted so far, in the sense that we have as objective 
to bound the fluctuations of the model uniformly over all boundary conditions.
This makes our setup less symmetric: for example, under both free and zero boundary conditions,
the law of the model is invariant under a global sign flip.
With non-zero boundary conditions this is clearly not the case.

The second model under consideration lives on directed trees
which have the property that each vertex has exactly one parent and at least two children.
We consider the locally uniform law on height functions which are 
\emph{monotone} in the sense that the height at the parent always dominates the height at the child.
We give a complete classification of Gibbs measures,
and describe exactly the (nontrivial) localisation-delocalisation transition.
It is possible for this model to delocalise,
even though we shall argue that this behaviour is atypical.
The potential associated to this model is convex but not strictly convex,
which invalidates our heuristic which (we believe) rules out delocalisation on trees.
There are several works on potentials which are not (strictly) convex
and which have delocalised extremal Gibbs measures~\cite{henning2021coexistence,henning2021existence,henning2019gradient}.
This includes the solid-on-solid model which has the convex potential function $V_\beta(a):=\beta|a|$ associated to it.
It is easy to see that such a potential cannot induce uniform localisation (although ruling out general localisation is harder):
if one forces the height function to equal some non-constant boundary height function $b$ on the complement of a finite set $\Lambda\subset V$,
then one can approximate a continuum model by multiplying the boundary height function $b$ by an extremely large integer $k$.
This necessarily blows up the pointwise variance of the height function at certain vertices in $\Lambda$.
This suggests that the strictly convex framework introduced below is indeed the correct framework for 
proving uniform localisation.

\section{Definitions and main results}

\subsection{Graph homomorphisms on trees}

 We adopt several notations and concepts from the work of Georgii~\cite{G11}
 and Sheffield~\cite{SHEFFIELD}, but we will try to be as self-contained as possible.

\begin{definition}[Graph homomorphisms]
    An \emph{irreducible tree} is a tree $\tree=(\V,\E)$ with minimum degree at least three.
    A \emph{height function} is a map $h:\V\to\Z$,
    and a \emph{graph homomorphism} is a height function $h:\V\to\Z$
    such that $h(y)-h(x)\in\{\pm1\}$ for any edge $xy\in\E$.
    Write $\Hom(\tree,\Z)$ for the set of graph homomorphisms on $\tree$.
\end{definition}

Write $\Lambda\subset\subset\V$ to say that $\Lambda$ is a finite subset of $\V$.

\begin{definition}[Uniform graph homomorphisms]
    \label{def:uni_hom}
    Write $\calF$ for the natural product sigma-algebra on $\Omega:=\V^\Z\supset\Hom(\tree,\Z)$.
    Write $\calP(\Omega,\calF)$ for the set of probability measures on $(\Omega,\calF)$.
    For any $\Lambda\subset\subset\V$ and $b\in\Hom(\tree,\Z)$,
    let $\gamma_\Lambda(\blank,b)\in\calP(\Omega,\calF)$ denote the uniform probability measure on the set
    \begin{equation}
        \left\{
            h\in\Hom(\tree,\Z) : h|_{\V\setminus\Lambda} = b|_{\V\setminus\Lambda}    
        \right\}.
    \end{equation}
    The probability measure $\gamma_\Lambda(\blank,b)$ is called the 
    \emph{local Gibbs measure in $\Lambda$ with boundary height function $b$}.
    The family $\gamma:=(\gamma_\Lambda)_{\Lambda\subset\subset\V}$ forms a \emph{specification}.
    A \emph{Gibbs measure} is a measure $\mu\in\calP(\Omega,\calF)$ which is supported on $\Hom(\tree,\Z)$
    and which satisfies the \emph{Dobrushin-Lanford-Ruelle (DLR) equations} in the sense that $\mu\gamma_\Lambda=\mu$
    for any $\Lambda\subset\subset\V$.
    In this case we say that (the distribution of) $h$ is \emph{locally uniform} in $\mu$.
    A Gibbs measure is said to be \emph{tail-trivial} if it is trivial on the 
    \emph{tail sigma-algebra} $\calT\subset\calF$.
    This sigma-algebra is realised as the intersection over $\Lambda\subset\subset\V$ of all sigma-algebras
    $\calT_\Lambda$ which make $h(x)$ measurable for all $x\in\V\setminus\Lambda$.
    Such measures are also called \emph{extremal}.
\end{definition}

\begin{definition}[Gradient measures]
    \label{def:uni_hom_grad}
    Let $\calF^\nabla$ denote the smallest sub-sigma-algebra of $\calF$ 
    which makes all the height differences $h(y)-h(x)$ measurable.
    A measure $\mu\in\calP(\Omega,\calF^\nabla)$ is called a \emph{gradient Gibbs measure}
    whenever it is supported on $\Hom(\tree,\Z)$ and satisfies the DLR equation $\mu\gamma_\Lambda=\mu$
    for each $\Lambda\subset\subset\V$.
    Extremal gradient Gibbs measures are gradient Gibbs measures which are
    trivial on the gradient tail-sigma-algebra $\calT^\nabla:=\calT\cap\calF^\nabla$.
\end{definition}

Note that we have defined gradient Gibbs measures $\mu$ on the same state space $
\Omega$ as for Gibbs measures. A  point $h
\in\Omega$ is a height function; however, the height $h(x)$ at a given vertex is not a $\mu$-measurable function if $\mu$ is  a gradient measure:
only height differences are.
In other words, the measure $\mu$ returns as samples height functions which are measurable \emph{up to an additive constant}. 
For the validity of the previous definition, observe that $\Hom(\tree,\Z)\in\calF^\nabla$,
and that each probability kernel $\gamma_\Lambda$ restricts to a probability kernel 
from $\calF^\nabla$ to $\calF^\nabla$.

\begin{remark}[Convex interaction potentials]
    \label{rem:convex_V}
  Uniform graph homomorphisms (and also the uniform monotone functions
  of the next section) are examples of random height functions which arise from a convex
  interaction potential. This means that the local Gibbs measure
  $\gamma_\Lambda(\cdot,b)$ can be written as a tilted version 
  of the counting measure, with the Radon-Nikodym derivative proportional to
  \begin{equation}
    \label{BG}
    \textstyle
    1_{\{h|_{\mathbb V\setminus \Lambda}=b|_{\mathbb V\setminus \Lambda}\}}\exp\left[-\sum_{xy\in \mathbb E(\Lambda)} V(h(y)-h(x))\right],
  \end{equation}
and where the potential function $V$ is convex.
Here $\mathbb E(\Lambda)$ denotes the set of edges having at least one endpoint in $\Lambda$.
For graph homomorphisms, $V$ is defined on odd integers, and
it equals $V(a)=\infty\times{\bf 1}_{|a|>1}$.
Sheffield~\cite{SHEFFIELD} developed a broad theory 
for the study of height functions arising from convex interaction potentials,
which applies to our setting.
\end{remark}

\begin{definition}[Localisation-delocalisation]
\label{def:loc}
An extremal gradient Gibbs measure $\mu$ is said to be
\emph{localised} if it is the restriction of a
(non-gradient) Gibbs measure to the gradient sigma-algebra,
and \emph{delocalised} otherwise.
If such a Gibbs measure exists, then it may be chosen to be extremal as well
(this is a straightforward consequence of extremality of the gradient measure $\mu$).
Lemma~8.2.5 of~\cite{SHEFFIELD} asserts that if
$\mu$ is an extremal Gibbs measure, then at every vertex $x\in \V$ the law of
$h(x)$ is log-concave, and therefore has finite moments of all
order.
This implies immediately
that the height fluctuations of
(localised) extremal Gibbs measures are finite.
\end{definition}
  
\begin{theorem}[Uniform localisation for graph homomorphisms]
    \label{thm:loc}
    Let $\tree=(\V,\E)$ denote an irreducible tree with a distinguished vertex $x\in\V$.
    Then the locally uniform graph homomorphism is uniformly localised, in the sense that:
    \begin{enumerate}
        \item Any gradient Gibbs measure is the restriction of a Gibbs measure to $\calF^\nabla$,
        \item There exists a universal constant $C$ such that any extremal Gibbs measure $\mu$ satisfies \[\Var_\mu [h(x)]\leq C.\]
    \end{enumerate}
\end{theorem}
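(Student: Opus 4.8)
The plan is to reduce both statements to a single uniform finite-volume estimate and then prove that estimate by a recursion on the tree. Call $(\star)$ the assertion that there is a universal constant $C$ with $\Var_{\gamma_\Lambda(\blank,b)}[h(x)]\le C$ for every irreducible tree $\tree$, every $x\in\V$, every $\Lambda\subset\subset\V$ and every $b\in\Hom(\tree,\Z)$. I claim that $(\star)$ implies the theorem. For part~(2), let $\mu$ be an extremal Gibbs measure; by the cited Lemma~8.2.5 of~\cite{SHEFFIELD} one has $h(x)\in L^2(\mu)$, so for the ball $B_n$ of radius $n$ about $x$ the law of total variance gives $\Var_\mu[h(x)]=\E_\mu\bigl[\Var_\mu(h(x)\mid h|_{\V\setminus B_n})\bigr]+\Var_\mu\bigl(\E_\mu[h(x)\mid h|_{\V\setminus B_n}]\bigr)$. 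By the DLR equation the inner conditional law of $h(x)$ is the $h(x)$-marginal of $\gamma_{B_n}(\blank,h)$, so the first term is at most $C$ by $(\star)$; and since $\sigma(h|_{\V\setminus B_n})=\calT_{B_n}\downarrow\calT$, on which $\mu$ is trivial, reverse martingale convergence sends $\E_\mu[h(x)\mid h|_{\V\setminus B_n}]$ to $\E_\mu[h(x)]$ in $L^2$, so the second term tends to $0$. As the left-hand side does not depend on $n$, this yields $\Var_\mu[h(x)]\le C$. For part~(1), the same uniform bound provides exactly the tightness (uniform in $n$ and in the boundary data) of the laws of $h(x)$ under $\gamma_{B_n}(\blank,\cdot)$ that is needed to realise any gradient Gibbs measure as the gradient of a genuine Gibbs measure, in the framework of~\cite{SHEFFIELD} and~\cite{G11}; a delocalised extremal gradient Gibbs measure would force these finite-volume variances to diverge, which $(\star)$ rules out.

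To prove $(\star)$ it suffices, by the above, to take $\Lambda=B_n$. I would root $\tree$ at $x$, and for $v\in B_n$ let $Z_v\colon\Z\to\Z_{\ge0}$ count the graph homomorphisms on the subtree of descendants of $v$ (together with the finitely many boundary vertices hanging below it) that take the value $k$ at $v$ and agree with $b$ on that boundary. These satisfy the recursion $Z_v(k)=\mathbf 1[\,|k-b(u)|=1\text{ for every }u\in\V\setminus\Lambda\text{ with }u\sim v\,]\cdot\prod_{w}\bigl(Z_w(k-1)+Z_w(k+1)\bigr)$, the product over the children $w$ of $v$ inside $B_n$, and $\pi_x:=Z_x/\sum_kZ_x(k)$ is precisely the law of $h(x)$ under $\gamma_{B_n}(\blank,b)$. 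I would first check that each $Z_v$ is log-concave, supported on an interval of a single parity class: this is preserved by the recursion, since products of log-concave sequences are log-concave, restriction to the $\le2$-point set $\{k:|k-b(u)|=1\}$ is harmless, and $g\mapsto g(\blank-1)+g(\blank+1)$ preserves log-concavity — writing $a=g(k-3),b=g(k-1),c=g(k+1),d=g(k+3)$, the three inequalities $b^2\ge ac$, $c^2\ge bd$, $bc\ge ad$, all consequences of log-concavity of $g$, combine to $(b+c)^2\ge(a+b)(c+d)$.

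Next I would analyse the recursion through the variances $V_v:=\Var(\pi_v)$ of the normalised sequences. If $v$ has a neighbour in $\V\setminus\Lambda$ — in particular if $v$ is a leaf of $B_n$ — then $\pi_v$ is supported on at most two points of one parity class, hence $V_v\le1$; this is the base case. At every other vertex $v$, irreducibility forces at least $m\ge2$ children, and $\pi_v$ is obtained from the children's $\pi_w$ by convolving each with the uniform law on $\{\pm1\}$, which raises the variance by exactly $1$, and then forming the normalised product of the $m\ge2$ resulting log-concave measures. Heuristically the product of $m\ge2$ such measures halves the variance (exactly so for two Gaussians), so the whole recursion is governed by the scalar map $V\mapsto(V+1)/2$, worst case on the $3$-regular tree, with stable fixed point $V=1$; iterating from the base case up to $x$ then gives $V_x=\Var_{\gamma_{B_n}(\blank,b)}[h(x)]\le C$ for a universal $C$, which is $(\star)$.

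The crux — and the step I expect to be the main obstacle — is making the contraction in the product step rigorous. Plain variance will not do, because uniform laws on long intervals are log-concave and are essentially fixed points of ``product of two'', so no universal inequality of the form $\Var(\text{product of }m\ge2)\le\tfrac12\max_i\Var(\pi_{w_i})+O(1)$ can hold for arbitrary log-concave inputs. The way around this must exploit that the measures $\pi_v$ that actually occur are assembled from $\le2$-point measures by alternating convolutions and products, which prevents them from ever being close to uniform on their support; concretely one should propagate through the recursion a quantitative strong log-concavity of $\pi_v$ — a uniform positive lower bound on $\log\pi_v(k-2)+\log\pi_v(k+2)-2\log\pi_v(k)$ away from the mode — alongside the variance, showing that this curvature is only mildly degraded by the convolution step and restored, with genuine variance contraction, by the product step. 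Equivalently, one can split according to whether the children's modes are far apart, in which case the product is supported on $O(1)$ points and $V_v=O(1)$ outright, or close together, in which case the product is Gaussian-like and contracts as above. Carrying the correct joint invariant through the recursion uniformly over all irreducible trees and all boundary conditions $b$ is the heart of the argument; the remaining points — non-ball $\Lambda$, arbitrary $b$ entering only through the base-case indicators, and the passage from $(\star)$ to the infinite-volume statements — are routine given the cited structure theory.
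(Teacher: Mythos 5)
Your overall architecture coincides with the paper's: reduce everything to a uniform finite-volume bound on the law of $h(x)$ under $\gamma_\Lambda(\blank,b)$, obtain that bound by rooting the tree at $x$ and propagating a log-concavity invariant through the recursion $\pi_v=\frac1Z\prod_w(\pi_w*X)$ (this is exactly the paper's~\eqref{eq:recursion}), and then pass to infinite volume by backward-martingale arguments. You also correctly diagnose that plain log-concavity plus variance bookkeeping cannot close the induction, and that the invariant to propagate is a \emph{quantitative} strong log-concavity constant. But at precisely that point you stop: you state that one ``should'' show the curvature is only mildly degraded by the convolution step and restored by the product step, and you call this ``the heart of the argument'' without proving it. That is the genuine gap. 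The paper's entire technical content is the missing lemma: if $p$ is $\lambda^2$-strongly log-concave (meaning $p(k)^2\ge\lambda^2 p(k-2)p(k+2)$), then $p*X$ is $\lambda$-strongly log-concave, where $\lambda\in[3,4]$ is the real root of $x^3-3x^2-x-1$. The proof is a short explicit computation: after normalising $p(\pm1)$ and using the hypothesis to bound $p(\pm3)$, the claim reduces to nonnegativity of a quadratic in $\alpha=p(1)/p(-1)$, which holds exactly when $\lambda^3-3\lambda^2-\lambda-1\ge0$ --- hence the specific $\lambda$. Since every interior vertex has at least two children, Proposition~\ref{proposition:product_concavity} then squares the constant back up from $\lambda$ to $\lambda^2$, closing the induction; and the uniform variance bound follows from $\lambda^2$-strong log-concavity alone via the Gaussian tail estimate of Remark~\ref{rem:var} --- no variance contraction $V\mapsto(V+1)/2$ is ever needed or used. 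Your alternative suggestion (case split on whether the children's modes are far apart) is likewise not carried out and would face the same quantitative issue.

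Two smaller points. Your deduction of part~(2) from $(\star)$ via total variance, DLR, and reverse martingale convergence on $\calT_{B_n}\downarrow\calT$ is essentially the paper's argument and is fine. For part~(1), however, ``the uniform bound provides exactly the tightness needed'' is not a proof: the paper makes this precise by showing that the backward martingale $X_k=\mu(h(x)-h(a_k)\mid\calF_k^\nabla)-(h(x)-h(a_k))$ has $\|X_k\|_2^2=\int\Var_{\gamma_{\Lambda_k}(\blank,b)}[h(x)]\,d\mu(b)\le C$, hence converges a.s.\ and in $L^2$, producing a height offset variable $H=X_\infty+h(x)$; a separate lemma then converts the height offset variable into a genuine Gibbs measure whose gradient restriction is $\mu$. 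You would need to supply this (or an equivalent) construction rather than appeal to an unspecified equivalence between delocalisation and divergence of finite-volume variances.
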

With respect to Definition~\ref{def:loc}, 
the localisation is said to be \emph{uniform} in this theorem because of the universal
upper bound in the second statement.
The constant $C$ is also uniform with
respect to the choice of the irreducible tree $\mathbb T$.
Note
that the result is false if we drop the requirement that the tree has
minimum degree at least three:
if the tree is $\mathbb Z$, then graph
homomorphisms are just trajectories of simple random walks.
In this case,
most gradient Gibbs measures are delocalised: these are i.i.d.\ laws on interface gradients, where each $h(x+1)-h(x)$ is a $\pm1$-valued Bernoulli random variable of some fixed parameter $p\in[0,1]$.
Such measures are localised if and only if $p\in\{0,1\}$,
in which case the configuration is completely frozen.

Let us remark that Theorem~\ref{thm:loc} can be extended (with the same proof) to other convex potentials $V$
with a uniform lower bound on the second derivative, conditional on a technical assumption that is explained in Remark~\ref{rem:gen} below.

\begin{remark}
 \label{rem:nocontradiction}
The uniform localisation statement of the theorem
         is not in contradiction with the fact that the gradient Gibbs
         measure $\mu$ where all height gradients along edges of the
         tree are i.i.d.\ symmetric random variables with values in
         $\{-1,+1\}$ is such that $\Var_\mu[h(x)-h(y)]$ is
         proportional to the graph distance between $x$ and $y$. In
         fact, it turns out that $\mu$ may be viewed as an \emph{actual} Gibbs measure
         (that is, not just a \emph{gradient} Gibbs measure), but also that it is \emph{not
           extremal}; this resolves the apparent contradiction
         mentioned in Section~\ref{sec:appcontr}. Both facts can be
         shown by defining the following random variable.
         Let $A_k$ denote the (empirical) average of $h$ 
         on the vertices at distance $k$ from some distinguished root vertex.
         (This sequence is always well-defined, even though it is not $\calF^\nabla$-measurable.
         It is obviously $\calF$-measurable.)
         Then $(A_k)_k$ is a martingale with independent increments
         (the increments are given by averaging the coin flips on the edges 
         leading to the new vertices).
         Its limit $A_\infty$ is called a \emph{height offset variable}
         (first defined in the work of Sheffield~\cite{SHEFFIELD},
         and discussed in more detail below).
         Note that $A_\infty$ is $\calT$-measurable, but not $\calT^\nabla$-measurable.
         However, its fractional part $\tilde A_\infty:=A_\infty-\lfloor A_\infty\rfloor$ is also $\calT^\nabla$-measurable.
         Its distribution is called the \emph{spectrum} of $A_\infty$.
         The distributions of the increments of the martingale are known explicitly,
        and therefore it is easy to see that the spectrum cannot be a Dirac mass.
        Thus, $\mu$ is nontrivial on the gradient tail-sigma-algebra,
        that is, the measure is not extremal:
         see~\cite[Sections~8.4 and~8.7]{SHEFFIELD} for details.
\end{remark}

\subsection{Monotone functions on directed trees}

\begin{definition}[Directed trees and monotone functions]
    By a \emph{directed tree} we mean a directed tree $\vec\tree=(\V,\vec \E)$
    such that each vertex has exactly one parent and at least one child.
    The parent of a vertex $x\in\V$ is written $p(x)$.
    If we write $xy\in\vec\E$, then $x$ is the child and $y$ the parent.
    A \emph{monotone function} is a function $f:\V\to\Z$
    such that $f(y)-f(x)\geq 0$ for all $xy\in\vec\E$.
    Write $\Mon(\vec\tree,\Z)$ for the set of 
    monotone functions.
\end{definition}

Notice that although our interest is in trees where each vertex has at least 
two children,
we do not impose this requirement in the definition.
In fact, the classification of Gibbs measures works also for trees in which 
some vertices have a single child.

\begin{definition}[Uniform monotone functions]
    For any $\Lambda\subset\subset\V$ and $b\in\Mon(\vec\tree,\Z)$,
    let $\gamma_\Lambda(\blank,b)\in\calP(\Omega,\calF)$ denote the uniform probability measure on the set
    \begin{equation}
        \left\{
            h\in\Mon(\vec\tree,\Z) : h|_{\V\setminus\Lambda} = b|_{\V\setminus\Lambda}    
        \right\}.
    \end{equation}
    All other definitions carry over from Definitions~\ref{def:uni_hom} and~\ref{def:uni_hom_grad}.
\end{definition}

Monotone functions fit the framework of convex interaction potentials 
(Remark~\ref{rem:convex_V})
by setting $V(a):=\infty\times {\bf 1}_{a<0}$
in the sum in~\eqref{BG},
where the sum now runs over \emph{directed} edges.

\begin{definition}[Flow, flow measures]
  \label{def:flow}
    A \emph{flow} $\phi:\vec\E\to(0,\infty]$ on a directed tree $\vec\tree=(\V,\vec\E)$
    is a map which satisfies
    \begin{equation}\textstyle
        \label{eq:flow_condition}
        \phi_{xp(x)}=\sum_{y:\,yx\in\vec\E}\phi_{yx}\quad\forall x\in\V.    
    \end{equation}
    The associated \emph{flow measure} is the gradient Gibbs measure $\mu_\phi$
    defined such that \[(h(y)-h(x))_{xy\in\vec\E}\] is an independent family of random variables,
    and such that
    \(
        h(y)-h(x)\sim\Geom(\phi_{xy}),
    \)
    where $\Geom(\alpha)$ is the distribution on $\Z_{\geq 0}$
    such that the probability of the outcome $k$ is proportional to $e^{-k\alpha}$.
\end{definition}

\begin{theorem}[Complete classification of gradient Gibbs measures]
    \label{thm:classification}
    The map $\phi\mapsto\mu_\phi$ is a bijection from the set of flows
    to the set of extremal gradient Gibbs measures.
\end{theorem}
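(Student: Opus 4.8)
The plan is to check, in order, that $\phi\mapsto\mu_\phi$ is well defined (each $\mu_\phi$ really is an extremal gradient Gibbs measure), that it is injective, and that it is surjective; the last point carries essentially all the weight. \emph{Well-definedness.} Fix a flow $\phi$. To verify the DLR equation $\mu_\phi\gamma_\Lambda=\mu_\phi$ it suffices, by consistency of the specification, to treat finite \emph{connected} $\Lambda$ (if $\mu\gamma_{\Lambda'}=\mu$ for connected $\Lambda'\supseteq\Lambda$, then $\mu=\mu\gamma_{\Lambda'}=\mu\gamma_{\Lambda'}\gamma_\Lambda=\mu\gamma_\Lambda$). For connected $\Lambda$, removing $\Lambda$ splits $\vec\tree$ into components, and $\calT_\Lambda\cap\calF^\nabla$ is generated by the increments on edges disjoint from $\Lambda$ together with the finitely many differences recording the relative heights of these components. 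Conditionally on this data, the $\mu_\phi$-law of the increments on $\E(\Lambda)$ has weight proportional to $\exp\!\big[-\sum_{e\in\E(\Lambda)}\phi_e\,\Delta_e\big]$, with $\Delta_e$ the increment along $e$. Regrouping this exponent as $\sum_v h(v)\,c_v$, the coefficient at any vertex $v\in\Lambda$ is $c_v=\sum_{c:\,cv\in\vec\E}\phi_{cv}-\phi_{vp(v)}=0$ by the flow condition~\eqref{eq:flow_condition}; the surviving terms involve only boundary heights, which are fixed by the conditioning, and whose signed $\phi$-coefficients sum to zero by conservation of flow, so the exponent is a well-defined constant on the admissible set. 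Hence the conditional law is the \emph{uniform} measure on monotone completions, which is exactly $\gamma_\Lambda$, so $\mu_\phi$ is a gradient Gibbs measure. For extremality, observe that for connected $\Lambda$ the $\sigma$-algebra $\calT_\Lambda\cap\calF^\nabla$ is generated by $\sigma(\Delta_e:e\cap\Lambda=\emptyset)$ together with those finitely many boundary-difference variables, so intersecting over all finite $\Lambda$ shows that $\calT^\nabla$ lies inside the tail $\sigma$-algebra of the independent family $(\Delta_e)_{e\in\vec\E}$, which is $\mu_\phi$-trivial by Kolmogorov's $0$--$1$ law. \emph{Injectivity} is immediate, since under $\mu_\phi$ the increment on $e$ has law $\Geom(\phi_e)$ and $\alpha\mapsto\Geom(\alpha)$ is injective on $(0,\infty]$ (the ratio of consecutive weights is $e^{-\alpha}$, and $\Geom(\infty)=\delta_0$ is the unique degenerate member).

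\emph{Surjectivity.} Let $\nu$ be an extremal gradient Gibbs measure. The heart of the proof is to show that the increments $(\Delta_e)_{e\in\vec\E}$ are \emph{independent} under $\nu$. One half of this is cheap and local: applying the DLR equation at a single vertex $x$ with parent $y$ and children $z_1,\dots,z_d$ shows that, conditionally on all other increments together with the sums $D_i:=\Delta_{xy}+\Delta_{z_ix}$, the vector $(\Delta_{xy},\Delta_{z_1x},\dots,\Delta_{z_dx})$ is uniform on $\{(t,D_1-t,\dots,D_d-t):0\le t\le\min_i D_i\}$. To promote this to genuine independence I would exploit extremality: fix an edge $e$, exhaust the branch of the tree hanging below $e$ by finite connected sets $\Lambda_n$, use the DLR equation to identify $\E_\nu[f\mid\calT_{\Lambda_n}\cap\calF^\nabla]$ with the uniform-completion ($\gamma_{\Lambda_n}$) average of $f$, and let $n\to\infty$; the conditioning $\sigma$-algebras decrease to the one generated by the increments on the branch above $e$, and backward-martingale convergence then forces the relevant conditional laws to be non-random, which is the Markov splitting across $e$. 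Iterating this over a nested family of edges, together with tail-triviality of $\calT^\nabla$, yields that all increments are independent. Once independence is available, the single-vertex DLR equation becomes the functional equation that $\pi_{xy}(t)\prod_i\pi_{z_ix}(D_i-t)$ be constant in $t$ for every admissible $(D_i)$, where $\pi_e$ is the law of $\Delta_e$; varying the $D_i$ one at a time forces each $\pi_e$ to be geometric, say $\pi_e=\Geom(\phi_e)$, and forces $\phi_{xy}=\sum_i\phi_{z_ix}$ at every vertex, i.e.\ $\phi$ is a flow. Hence $\nu=\mu_\phi$.

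\emph{Main obstacle.} The exponential-family bookkeeping in the well-definedness step and the functional-equation argument at the end are routine. The real difficulty is proving independence of the increments for a general extremal gradient Gibbs measure: this is the only step that genuinely uses extremality, and it is delicate precisely because the gradient specification is \emph{not} a Markov specification in the naive sense — the kernel $\gamma_\Lambda$, read on $\calF^\nabla$, depends on the increments \emph{inside} $\E(\Lambda)$ through the relative positions of the boundary components — so the backward-martingale argument has to be set up with care, in particular controlling the uniform-completion averages on $\Lambda_n$ as the region grows.
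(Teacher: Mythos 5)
Your overall architecture (well-definedness, injectivity, surjectivity via independence of increments plus a functional equation) is reasonable, and your DLR verification for $\mu_\phi$ by the exponential-family/flow-condition computation is a valid alternative to the paper's route (single-site invariance plus Kirszbraun connectivity of the state space). But there are two genuine gaps, one of which is an outright error.

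The error is the extremality argument. You claim that $\calT^\nabla$ lies inside the tail $\sigma$-algebra of the independent family $(\Delta_e)_{e\in\vec\E}$ and conclude by Kolmogorov's $0$--$1$ law. This inclusion is false for gradient measures, and the paper itself contains the counterexample (Remark~\ref{rem:nocontradiction}): the graph-homomorphism measure with i.i.d.\ $\pm1$ increments on a tree has independent increments, yet is \emph{not} extremal, because the fractional part of the height offset variable $A_\infty$ is a nondegenerate $\calT^\nabla$-measurable random variable. The point is exactly the one you flag yourself in passing: $\calT_\Lambda\cap\calF^\nabla$ contains the cross-component height differences, which are sums of increments along paths \emph{through} $\Lambda$; as $\Lambda$ grows, these do not become increment-tail-measurable --- their limits (suitably renormalised) can survive as nontrivial $\calT^\nabla$-measurable functions of \emph{all} the increments. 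Your argument would ``prove'' extremality of the non-extremal measure of Remark~\ref{rem:nocontradiction}, so it cannot be repaired as stated. The paper's fix is structural: extremality of $\mu_\phi$ is deduced \emph{after} the classification of extremal measures, from the fact that a geometric distribution cannot be written as a nontrivial convex combination of geometric distributions, so a non-extremal $\mu_\phi$ could not decompose into flow measures.

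The second gap is that the heart of surjectivity --- independence of the increments under an arbitrary extremal gradient Gibbs measure --- is only a plan, and the plan as described is shaky: if you exhaust only the branch below an edge $e$ by sets $\Lambda_n$, the $\sigma$-algebras $\calT_{\Lambda_n}\cap\calF^\nabla$ do not decrease to $\calT^\nabla$ (they retain all the information outside the branch), so tail-triviality of $\mu$ does not directly deliver the Markov splitting across $e$ via backward martingales. The paper's mechanism is different and is worth knowing: for each vertex $x$ it considers the generation-minima $X_k:=\min_{yz\in E_k(x)}(h(z)-h(y))$, shows from local uniformity that $(X_0,\dots,X_n)$ is conditionally uniform on the simplex $\{x_i\ge0:\sum x_i=a\}$ (hence exchangeable), applies de~Finetti's theorem together with triviality of $\calT^\nabla$ to conclude that $(X_k)_k$ is i.i.d.\ geometric and independent of $\nabla h|_{\V\setminus D(x)}$, and only then reads off independence of the edge increments and the flow condition. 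Your final functional-equation step recovering geometric laws and $\phi_{xp(x)}=\sum_i\phi_{z_ix}$ is fine once independence is in hand, and matches the paper's observation that the minimum of independent $\Geom(\alpha_i)$ variables is $\Geom(\sum_i\alpha_i)$.
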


The classification relies on the very particular combinatorial properties
of the model; a similar classification is not expected in general.

\begin{theorem}[Localisation criterion]
    \label{thm:flow_localised}
    A flow measure $\mu_\phi$ is localised
    if and only if
    \begin{equation}
        \label{eq:crit}
        \sum_{y\in A(x)} e^{-\phi_{yp(y)}}<\infty
    \end{equation}
    for some $x\in\V$,
    where $A(x)\subset\V$ denotes the set of ancestors of $x$.
\end{theorem}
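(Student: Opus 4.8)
The plan is to characterise localisation of $\mu_\phi$ via the convergence of the associated height-offset martingale, in the spirit of Sheffield's Section 8. Recall that a flow measure $\mu_\phi$ makes the increments $h(y)-h(x)\sim\Geom(\phi_{xy})$ independent along directed edges. Fix the distinguished vertex $x$ and consider the sequence of ancestors $x=x_0, x_1=p(x_0), x_2=p(x_1),\dots$, writing $\phi_k:=\phi_{x_k x_{k+1}}$ for the flow value on the $k$-th ancestral edge. The idea is that $\mu_\phi$ is localised precisely when one can consistently pin down the height at $x$ (equivalently, define an honest $\calF$-measurable height rather than just a gradient), and that the obstruction to this lies entirely along the ancestral ray.

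First I would construct, for a given flow $\phi$ and a finite exhaustion $\Lambda_n\uparrow\V$, a candidate height function by integrating the gradients from $x$ upward along the ancestral ray and then outward, and show that the pointwise law of $h(x)$ under $\gamma_{\Lambda_n}(\blank,b)$ (for the natural boundary data read off from the gradient measure) converges iff the partial sums $\sum_{k} G_k$ converge, where $G_k$ is a geometric variable of parameter $\phi_k$. Since $G_k\geq 0$ and $\E[G_k]$ and $\Var[G_k]$ are both comparable to $e^{-\phi_k}/(1-e^{-\phi_k})$, which is $\asymp e^{-\phi_k}$ when $\phi_k$ is bounded below and $\asymp 1/\phi_k$ otherwise, the series $\sum_k G_k$ converges a.s.\ iff $\sum_k \min(e^{-\phi_k},1)<\infty$, and one checks this is equivalent to $\sum_{k}e^{-\phi_k}<\infty$ because a flow on an irreducible-type directed tree (each vertex at least two children) forces $\phi_k\to\infty$ geometrically fast, so the truncation is immaterial. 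Summing $e^{-\phi_{yp(y)}}$ over \emph{all} ancestors $y\in A(x)$ rather than just the ray is the same sum, since $A(x)$ \emph{is} the ancestral ray. So the criterion \eqref{eq:crit} is exactly the condition that this ancestral series converges; and its truth is independent of the choice of $x$ because changing $x$ only adds or removes finitely many terms.

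The substantive direction is: convergence of \eqref{eq:crit} implies $\mu_\phi$ is localised. Here I would exhibit an explicit non-gradient Gibbs measure $\bar\mu_\phi$ whose gradient projection is $\mu_\phi$. The construction: sample the gradients according to $\mu_\phi$; then the ``height at infinity along the ray'', namely $h(x)+\sum_{k\ge 0}G_k$ with the convention that we are measuring $h$ relative to its value at the top, is a well-defined finite random variable by hypothesis; use it (suitably centred) to fix the additive constant, i.e.\ set $h(x):=-\sum_{k\ge 0}G_k + Z$ for an independent integer-valued $Z$ chosen so that the resulting measure is shift-covariant in the right sense and satisfies the DLR equations. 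One then verifies $\bar\mu_\phi\gamma_\Lambda=\bar\mu_\phi$ directly: conditionally on the exterior of $\Lambda$, the gradients inside are uniform on the monotone-compatible configurations, which is precisely $\gamma_\Lambda$ acting on the already-fixed boundary heights. For the converse, if \eqref{eq:crit} diverges, then along the ray the height offset martingale $A_k$ (the average of $h$ over generation-$k$ descendants of $x$, say) fails to converge — more precisely its spectrum, the law of the fractional part of the limit, is non-degenerate — so by the argument sketched in Remark~\ref{rem:nocontradiction} and \cite[Sections 8.4, 8.7]{SHEFFIELD}, $\mu_\phi$ cannot be the restriction of any Gibbs measure; any such restriction would have to assign $h(x)$ a genuine law, contradicting the a.s.\ divergence of $\sum_k G_k$.

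The main obstacle I anticipate is the careful bookkeeping in the ``only if'' direction: one must rule out that some clever choice of boundary conditions at infinity in the \emph{non-ancestral} directions could rescue localisation even when the ancestral series diverges. The key point making this work is monotonicity combined with the tree structure: all of $x$'s descendants sit \emph{below} $x$ in height, and the value $h(x)$ is bounded below by $h(y)$ for every ancestor $y$ and bounded above by nothing in the descendant direction in a way that could compensate; so the only mechanism that can pin $h(x)$ is the finiteness of the upward ancestral sum. Making this lower-bound/monotonicity argument rigorous — showing $\Var[h(x)]$ under any Gibbs measure projecting to $\mu_\phi$ is bounded below by $\mathrm{const}\cdot\sum_{k\le n}e^{-\phi_k}$ for all $n$ — is where the real work lies, and I would lean on log-concavity of $h(x)$ (Definition~\ref{def:loc}, via Sheffield's Lemma 8.2.5) together with the independence of ancestral increments to get a clean variance lower bound that forces divergence.
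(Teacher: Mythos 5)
Your ``if'' direction is essentially the paper's argument: along the ancestral ray $x_0,x_1=p(x_0),\dots$ the increments $h(x_{k+1})-h(x_k)$ are independent $\Geom(\phi_k)$ variables, the series $\sum_k (h(x_{k+1})-h(x_k))$ of nonnegative terms converges almost surely exactly when $\sum_k e^{-\phi_k}<\infty$, and the limit $\lim_n h(x_n)$ is then a height offset variable, so the anchoring lemma produces a genuine Gibbs measure restricting to $\mu_\phi$. (The auxiliary randomisation $Z$ in your construction is unnecessary; one simply sets $\tilde h:=h-\lfloor H(h)\rfloor$.)

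The ``only if'' direction, however, contains a genuine gap, which you half-acknowledge yourself. The claim that a Gibbs extension ``would have to assign $h(x)$ a genuine law, contradicting the a.s.\ divergence of $\sum_k G_k$'' is not a valid deduction: $h(x_n)-h(x_0)\to+\infty$ almost surely is perfectly compatible with $h(x_0)$ having a proper, even deterministic, law (anchor at $x_0$). The obstruction must come from the DLR equations for finite domains containing $x_0$ with receding boundary, and what needs to be proved is that $\int\Var_{\gamma_\Lambda(\blank,b)}[h(x_0)]\,d\mu(b)\to\infty$ along an exhaustion; by orthogonality of martingale increments this integral is monotone in $\Lambda$ and is dominated by the variance of $h(x_0)$ under any Gibbs extension, so its divergence is what rules out localisation. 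The paper obtains this lower bound by taking $\Lambda=D^{n+k}(x_n)\setminus\{x_n\}$ and proving a quenched convergence claim (for $\mu$-a.e.\ $b$, the conditional law of $\nabla h|_{D^{n+1}(x_n)}$ converges as $k\to\infty$ to its law under $\mu$), which reuses the exchangeability and de Finetti structure of Lemma~\ref{lem:definetti} together with extremality of $\mu$; this yields a lower bound by $\Var_\mu[h(x_n)-h(x_0)]\ge\sum_{m<n}e^{-\phi_m}$, which diverges. Your proposed substitutes do not supply this: log-concavity of the marginal of $h(x)$ gives upper, not lower, control on fluctuations, and the monotonicity heuristic does not interact with the DLR equations. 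The step you defer as ``where the real work lies'' is precisely the content of the proof, and it is missing.
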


\begin{remark}
    \begin{enumerate}
\item The summability condition is independent of the choice of $x\in\V$,
because $|A(x)\Delta A(y)|<\infty$ for any $x,y\in\V$.

\item
The previous
theorem implies that gradient Gibbs measures are localised in most
natural cases.  Indeed, if $\vec\tree$ is a regular tree in which each
vertex has $d\geq 2$ children, and if $\phi$ is invariant under the
automorphisms of $\vec\tree$ which preserve the generations of the
tree, then $\phi$ increases by a factor $d$ with each older generation.  In
that case, it is clear that the localisation criterion is satisfied.
Since it is natural to impose extra symmetries on $\phi$, we say that
uniform monotone functions are \emph{typically localised} on trees.

\item
On the other hand, a (pathological) example of a delocalised flow
measure is obtained by taking $\phi$ to be a flow that equals $p>0$ on
a directed infinite ray $\mathcal R$ of $\vec\tree$ and zero
elsewhere. In this case, the height increments are i.i.d.\ $\Geom(p)$
as one walks along $\mathcal R$, while the height equals $-\infty$ on vertices
outside of $\mathcal R$. Strictly speaking this example is not allowed by Definition~\ref{def:flow}
because the flow should be strictly positive, but one can easily
construct flows that approximate the finite constant 
$p$ along $\mathcal R$ in the ancestor direction,
and such that it is strictly positive
(but small) on the remaining edges.

\item
If $\mathbb T=\mathbb Z$, then graph homomorphisms and monotone height functions are in bijection (simply via a $45^\circ$ rotation of the graph of the height function).
This mapping fails when $\mathbb T$ is a non-trivial tree.
In fact, the results above show that one can have delocalisation for uniform monotone surfaces but not for uniform homomorphisms.

    \end{enumerate}
\end{remark}

Our results so far concern (gradient) Gibbs measures on infinite
trees. We address now the following finite-volume question. Let $
\vec\tree=(\V,\vec\E) $ be a regular directed tree where each vertex
has $d\ge 2$ children.
Fix a vertex $v\in\V$, and let $D^n$ denote the
set of descendants of $v$ (including $v$ itself), up to distance $n-1$
from $v$. Let also $\partial D^n$ denote the set of descendants of $v$
that are at distance exactly $n$ from it. Let $\gamma_n$ be the
uniform measure on monotone functions on $D^{n+1}$, subject to the
boundary conditions $h(v)=0$ and $h|_{ \partial D^n}\equiv- n
$. Theorem~\ref{thm:flow_localised} already suggests that the height
will be localised under $\gamma_n$, with bounded fluctuations as
$n\to\infty$. However, it is not \emph{a priori} clear whether the typical
height profile will be dictated by the boundary height $0$, which is enforced at the vertex $v$,
or the boundary height $-n$, enforced at the descendants at distance $n$,
or
if the height profile will somewhat smoothly interpolate between them.
This question is
answered by the following result.
\begin{theorem}
  \label{th:finite_volume}
  Sample $h|_{D^{n+1}}$ from $\gamma_n$.  For any fixed descendant $x$ of
  $v$, $h(x)$ tends to $0$ in probability, as
  $n\to\infty$. Moreover, 
  for some suitably chosen constant $c(d)$ depending only on $d$,
  the event that $h(x)=0$ for all descendants of $v$ at distance at most $n-c(d)\log n$ from it has probability $1+o(1)$ as
  $n\to\infty$.
\end{theorem}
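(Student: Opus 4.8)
The idea is to reduce everything to estimates on a single sequence of ``transfer weights''. For $m\ge 0$ and $l\ge 0$ let $\psi_m(l)$ be the number of monotone functions on a complete rooted $d$-ary tree of depth $m$ whose root is pinned to $0$ and whose $d^m$ leaves are all pinned to $-l$. Monotonicity (a child never exceeds its parent) forces $\psi_0=\delta_0$ and $\psi_m(l)=\big(\sum_{l'=0}^{l}\psi_{m-1}(l')\big)^d$, since a child of the root may sit at any height $-l'$ with $0\le l'\le l$ and the $d$ subtrees are then independent. In particular $\psi_m(l)\ge 1$ for all $l\ge 0$, $m\ge 1$, the sequence $\psi_m$ is strictly increasing for $m\ge 2$, and the normalising constant of $\gamma_n$ equals $Z:=\psi_n(n)$ ($v$ carries $n$ generations of descendants and the prescribed total drop along each branch is $n$). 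More generally, cutting $D^{n+1}$ along the generation at distance $r$ from $v$ gives the identity $Z=\sum_h\prod_{\mathrm{dist}(x,v)=r}\psi_{n-r}\big(n+h(x)\big)$, the sum over monotone $h$ on $D^{r+1}$ with $h(v)=0$; the term $h\equiv 0$ equals $\psi_{n-r}(n)^{d^r}$, which is also the number of configurations with $h\equiv 0$ on all of $D^{r+1}$, so that $\gamma_n[\,h\equiv 0\text{ on }D^{r+1}\,]=\psi_{n-r}(n)^{d^r}/Z$. I would then establish two facts about $\psi_m$. First, each $l\mapsto\psi_m(l)$ is log-concave: this follows by induction, because $\psi_1\equiv 1$, a $d$-th power preserves log-concavity, and the partial-sum operation preserves log-concavity of a positive sequence $\psi$ (in the only nontrivial case $\psi(l+1)>\psi(l)$ one has $\psi(l')\le\psi(l)\big(\psi(l)/\psi(l+1)\big)^{l-l'}$ for $l'\le l$, hence $\sum_{l'\le l}\psi(l')\le\psi(l)\psi(l+1)/(\psi(l+1)-\psi(l))$, which is precisely the log-concavity inequality for the partial sums). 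A consequence is that $\rho_m(l):=\psi_m(l)/\psi_m(l-1)$ is non-increasing in $l$, so $\psi_m(l-j)/\psi_m(l)\le\rho_m(l)^{-j}$ for $0\le j\le l$. Second, and this is the crucial input, $\rho_m(n)\ge(1+1/n)^{d^{\,m-1}}$ for all $m\ge 2$, $n\ge 1$: the case $m=2$ is exact since $\psi_2(l)=(l+1)^d$, and for the inductive step one writes $\rho_m(n)=(\Psi/\Psi')^d$ with $\Psi=\sum_{l'=0}^n\psi_{m-1}(l')$, $\Psi'=\sum_{l'=0}^{n-1}\psi_{m-1}(l')$, uses log-concavity in the form $\psi_{m-1}(l')\le\psi_{m-1}(n)\rho_{m-1}(n)^{-(n-l')}$, sums the geometric series to get $\Psi'\le\psi_{m-1}(n)/(\rho_{m-1}(n)-1)$, and concludes $\Psi/\Psi'=1+\psi_{m-1}(n)/\Psi'\ge\rho_{m-1}(n)$, hence $\rho_m(n)\ge\rho_{m-1}(n)^d$.

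\textbf{First assertion.} Fix a descendant $x$ at distance $k\ge 1$ from $v$. Cutting $D^{n+1}$ along the path $v=x_0,\dots,x_k=x$ and summing over the admissible heights $0=a_0\le a_1\le\dots\le a_k=j$ along it (the subtrees hanging off the path contributing independently) gives the exact formula $\gamma_n[h(x)=-j]=\psi_{n-k}(n-j)\,S_k(j)/Z$, where $S_k(j)$ is a sum of at most $(j+1)^k$ terms each bounded by $S_k(0)$ (the relevant per-vertex weights are non-increasing in $a_i$), and $Z\ge\psi_{n-k}(n)\,S_k(0)$. Hence $\gamma_n[h(x)=-j]\le(j+1)^k\,\psi_{n-k}(n-j)/\psi_{n-k}(n)$; bounding $\psi_{n-k}(n-j)\le\psi_{n-k}(n-1)$ for $j\ge 1$ and summing over $1\le j\le n$ yields $\gamma_n[h(x)\le -1]\le(n+1)^{k+1}/\rho_{n-k}(n)\le(n+1)^{k+1}\exp\big(-d^{\,n-k-1}/(2n)\big)\to 0$ as $n\to\infty$, using the growth estimate and $\log(1+1/n)\ge 1/(2n)$. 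Since $h(x)\in\{0,-1,\dots,-n\}$, this is precisely the convergence of $h(x)$ to $0$ in probability.

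\textbf{Second assertion.} Choose $c=c(d)$ with $c\log d>2$ and set $r=n-c\log n$. By the identity above, $Z=\sum_h\prod_x\psi_{n-r}(n+h(x))\le\psi_{n-r}(n)^{d^r}\sum_h\varepsilon^{\sum_x(-h(x))}$, where $\varepsilon:=\rho_{n-r}(n)^{-1}$ and we used $\psi_{n-r}(n-j)/\psi_{n-r}(n)\le\varepsilon^{j}$. The last sum equals $G_r(0)$ for the auxiliary recursion $G_0(j)=\varepsilon^j$, $G_k(j)=\big(\sum_{j'\ge j}G_{k-1}(j')\big)^d$, which has the explicit solution $G_k(j)=c_k\varepsilon^{jd^k}$ with $c_0=1$, $c_k=c_{k-1}^d(1-\varepsilon^{d^{k-1}})^{-d}$, so that $\log c_k\le 4\,d^k\varepsilon$ for $\varepsilon\le 1/2$ (a geometric-series estimate). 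Therefore $\gamma_n[h\equiv 0\text{ on }D^{r+1}]=\psi_{n-r}(n)^{d^r}/Z\ge c_r^{-1}\ge\exp(-4\,d^r\varepsilon)$. Finally the growth estimate gives $\varepsilon\le(1+1/n)^{-d^{\,n-r-1}}\le\exp\big(-d^{\,c\log n-1}/(2n)\big)=\exp\big(-n^{\,c\log d-1}/(2d)\big)$, so $d^r\varepsilon=\exp\big((n-c\log n)\log d-n^{\,c\log d-1}/(2d)\big)\to 0$ because $c\log d-1>1$; hence $\gamma_n[h\equiv 0\text{ on }D^{r+1}]\to 1$.

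\textbf{Main difficulty.} The heart of the matter is the growth estimate $\rho_m(n)\ge(1+1/n)^{d^{m-1}}$. A priori one only sees that $\rho_m(n)$ is barely above $1$ for small $m$ (e.g.\ $\rho_2(n)=(1+1/n)^d$), which is hopelessly weak; the point is that log-concavity upgrades the naive one-step bound into the self-improving recursion $\rho_m(n)\ge\rho_{m-1}(n)^d$, producing growth doubly exponential in $m$. This strength is exactly what the argument needs: for the first assertion it beats every polynomial in $n$, and for the second it makes the error term beat the exponentially many vertices of $D^{r+1}$, for which a naive union bound (applying the first assertion at each vertex separately) would not suffice.
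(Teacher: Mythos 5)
Your proof is correct, and it takes a genuinely different route from the paper's. The paper's argument is structural: it extracts from the proof of Lemma~\ref{lem:definetti} the exchangeability of the generation-minima $X_0,\dots,X_{n-1}$ on the subtree below a child $z_i$ of $v$, notes that $X_{n-1}$ (the minimum of $d^{n-1}$ conditionally independent, roughly uniform bottom gradients) vanishes with probability at least $1-(1-\tfrac1{k+1})^{d^{n-2}}$, transfers this to $X_0=h(v)-h(z_i)$ by exchangeability, and then iterates down the tree with a union bound over generations. Your argument is instead a self-contained transfer-operator computation: you introduce the partition functions $\psi_m(l)$, prove their log-concavity in $l$ (your partial-sum lemma is correct), and upgrade the trivial bound $\rho_2(n)=(1+1/n)^d$ into the self-improving recursion $\rho_m(n)\ge\rho_{m-1}(n)^d$, whose doubly-exponential output $\rho_m(n)\ge(1+1/n)^{d^{m-1}}$ plays exactly the role of the paper's $d^{n-2}$ exponent. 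I checked the path decomposition for the first assertion (the monotonicity of the off-path weights $W_i$ and the count $(j+1)^k$ of chains are both fine) and the auxiliary recursion $G_k(j)=c_k\varepsilon^{jd^k}$ for the second (note only that the unconstrained sum dominates, rather than equals, the constrained one — the inequality goes the right way). What your approach buys is explicit quantitative control and independence from Section~\ref{sec:complete}; what it costs is length, since the exchangeability symmetry lets the paper avoid any estimate on partition-function ratios. One cosmetic remark: your second assertion yields $o(1)$ failure probability only for $c\log d>2$, while the paper's union bound works for any sufficiently large $c(d)$ as well; both match the statement as given.
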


The theorem holds true more generally if the boundary height at $ \partial D^n$
is fixed to some value $-\lfloor a n\rfloor$ for $a>0$.
The fact that
the effect of the boundary condition at the single
vertex $v$ prevails against the effect of the boundary condition at
the $d^n$ vertices of $ \partial D^n$, except at distance $O(\log n)$ from
the latter, is perhaps surprising at first sight.
On the other hand, note that a
positive fraction of the vertices of $D^n$ is actually at distance $O(1)$
from $\partial D^n$.
This property sets the theorem apart from the case that $d=1$,
in which case the height profile is linear.

\section{Uniform localisation of graph homomorphisms}

\begin{definition}
    Let $p:\mathbb Z\to[0,1]$ denote a probability mass function,
    whose support consists of a consecutive sequence of either odd or even integers.
    The distribution $p$ is called \emph{$\alpha$-strongly log-concave} for some $\alpha\geq 1$
    whenever
    \[
        p(k)^2\geq\alpha p(k-2)p(k+2)
        \quad \forall k\in\Z.
    \]
\end{definition}

The following follows immediately from the previous definition.
\begin{proposition}
    \label{proposition:product_concavity}
    Let $p,q:\mathbb Z\to[0,1]$ denote probability mass functions
    which are supported on the even or the odd integers,
    and whose supports are not disjoint.
    If $p$ and $q$ are $\alpha$- and $\beta$-strongly log-concave respectively,
    then the normalized probability mass function $\frac1Zpq$ is $\alpha\beta$-strongly log-concave.
\end{proposition}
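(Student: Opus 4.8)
The plan is to argue directly from the definition, the point being that the product of the two defining inequalities \emph{is} the defining inequality for the product measure. Write $Z := \sum_{k\in\Z} p(k)q(k)$ and $r := \frac1Z pq$. First I would check that $r$ is an object to which the definition of strong log-concavity applies. Since the supports of $p$ and $q$ are each a consecutive run of integers of a single parity and they are not disjoint, the two runs have the same parity and their intersection is again a (nonempty) consecutive run of integers of that parity; this is precisely the support of $r$. Moreover $0 < Z \leq \sum_k p(k) = 1$, where positivity again uses non-disjointness of the supports, so the normalisation is legitimate and $r$ is a genuine probability mass function of the required form.

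It then remains to verify $r(k)^2 \geq \alpha\beta\, r(k-2)r(k+2)$ for every $k\in\Z$, and I would split into two cases. If $r(k) > 0$, then $p(k) > 0$ and $q(k) > 0$, so the inequalities $p(k)^2 \geq \alpha\, p(k-2)p(k+2)$ and $q(k)^2 \geq \beta\, q(k-2)q(k+2)$ both have non-negative right-hand sides and may be multiplied; dividing the resulting inequality by $Z^2 > 0$ yields exactly the desired bound, since $r(j)^2 = \frac1{Z^2}p(j)^2 q(j)^2$ and $r(k-2)r(k+2) = \frac1{Z^2}p(k-2)p(k+2)q(k-2)q(k+2)$. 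If instead $r(k) = 0$, then because the support of $r$ is a consecutive run, at least one of $r(k-2)$ and $r(k+2)$ must vanish, so the right-hand side is zero and the inequality is trivial.

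There is no substantial obstacle here: the statement is elementary, and the only steps requiring any care are the two bookkeeping points already isolated above, namely that the product's support has the correct shape (which is where the hypothesis of non-disjoint supports is used, both to give $Z > 0$ and to match parities) and the handling of the boundary of the support in the case $r(k) = 0$.
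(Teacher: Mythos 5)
Your proof is correct, and it is the same argument the paper has in mind when it says the proposition ``follows immediately from the previous definition'': multiply the two defining inequalities and normalise. One small simplification: the case split on $r(k)=0$ is unnecessary, since both inequalities $p(k)^2\geq\alpha\,p(k-2)p(k+2)\geq 0$ and $q(k)^2\geq\beta\,q(k-2)q(k+2)\geq 0$ hold with nonnegative sides in all cases, so they can always be multiplied directly.
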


\begin{remark}
  \label{rem:var}
  If a probability mass function $p$ is $\alpha$-strictly log-concave
  for some
  $\alpha>1$, then it has finite moments of all orders and its variance is bounded by a constant $C(\alpha)$ that
  depends only on $\alpha$.
  To see this, suppose that $p$ takes its absolute maximum at some $k_0\in \mathbb Z$.
  Strict convexity implies
  \[
    \log p(k)-\frac12(\log p(k-1)+\log p(k+1))\ge \beta=:\frac12\log \alpha>0,
  \]
  so that
  \[
    p(k_0+\ell)\le p(k_0)e^{-\beta(\ell^2-1)}\quad\forall \ell\in\mathbb Z
  \]
  and finiteness of all moments is obvious.
  The variance of any real-valued random variable $X$ 
  may be written $\Var X=\inf_z\mathbb E[(X-z)^2]$,
  and the uniform bound depending only on $\alpha$ follows by choosing $z=k_0$.
\end{remark}

If $p$ and $q$ are probability mass functions,
then write $p*q$ for their convolution, which is also a probability mass function.
Write $X:\Z\to[0,1]$ for the probability mass function defined by
$2X(k)={\bf 1}_{|k|=1}$.

\begin{definition}
    Let $\lambda\in[3,4]$ denote the unique real root of
    \(
        x^3 - 3 x^2 - x - 1
    \).
\end{definition}

\begin{lemma}
    \label{lem:lambdasquaredlambda}
    If $p$ is $\lambda^2$-strongly log-concave,
    then $p*X$ is $\lambda$-strongly log-concave.
\end{lemma}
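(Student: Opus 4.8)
The plan is to compute the convolution $p * X$ explicitly and verify the strong log-concavity inequality directly. Since $2X(k) = \mathbf 1_{|k|=1}$, we have $(p*X)(k) = \tfrac12(p(k-1) + p(k+1))$. Note that if $p$ is supported on (a consecutive run of) the even integers, then $p*X$ is supported on the odd integers (and vice versa), so the claim is about the inequality $(p*X)(k)^2 \geq \lambda\,(p*X)(k-2)(p*X)(k+2)$ for all $k$ in the appropriate parity class. Abbreviating $a = p(k-3)$, $b = p(k-1)$, $c = p(k+1)$, $d = p(k+3)$ (four consecutive values of $p$ in its parity class), the inequality to prove becomes
\[
    (b+c)^2 \geq \lambda\,(a+b)(c+d).
\]
The hypothesis that $p$ is $\lambda^2$-strongly log-concave gives us $b^2 \geq \lambda^2 ac$, $c^2 \geq \lambda^2 bd$, and (the "long-range" consequence, obtained by chaining) $bc \geq \lambda^2\sqrt{ac\cdot bd}$, or more usefully the consequences one extracts by composing adjacent inequalities; in particular $b^2 \geq \lambda^2 ac$ and $c^2\geq \lambda^2 bd$ together with log-concavity along the chain $a,b,c,d$ itself.

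The key step is the following homogeneous optimization: among all nonnegative quadruples $(a,b,c,d)$ satisfying $b^2 \geq \lambda^2 ac$ and $c^2 \geq \lambda^2 bd$, we want to show $\frac{(b+c)^2}{(a+b)(c+d)} \geq \lambda$. By homogeneity (scaling and the symmetry $a\leftrightarrow d$, $b\leftrightarrow c$) I would normalize, say set $b = c = 1$, reducing to: if $a \leq 1/\lambda^2$ and $d \leq 1/\lambda^2$, then $\frac{4}{(1+a)(1+d)} \geq \lambda$, i.e. $(1+a)(1+d) \leq 4/\lambda$. The worst case is $a = d = 1/\lambda^2$, giving the requirement $(1 + \lambda^{-2})^2 \leq 4/\lambda$. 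Expanding, this is $\lambda(\lambda^2+1)^2 \leq 4\lambda^4$, i.e. $\lambda^5 + 2\lambda^3 + \lambda \leq 4\lambda^4$, i.e. $\lambda^4 - 4\lambda^3 + 2\lambda^2 - 1 \geq 0$ (dividing by $\lambda$); one checks this polynomial inequality holds for $\lambda \geq 3$ and in fact the defining cubic $\lambda^3 - 3\lambda^2 - \lambda - 1 = 0$ should make this tight or clean — indeed $\lambda^4 - 4\lambda^3 + 2\lambda^2 - 1 = \lambda(\lambda^3 - 3\lambda^2 - \lambda - 1) - (\lambda^3 - \lambda^2 - \lambda) - 1 = -(\lambda^3 - \lambda^2 - \lambda + 1) = -(\lambda-1)(\lambda^2-1) = -(\lambda-1)^2(\lambda+1) \le 0$, which points the wrong way, so the reduction to $b=c$ must be handled with more care — the true extremal configuration need not have $b = c$.

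The main obstacle, therefore, is correctly identifying the extremal configuration for the ratio $\frac{(b+c)^2}{(a+b)(c+d)}$ over the constraint region. I expect that one must also use the full force of $\lambda^2$-log-concavity, not just the two inequalities $b^2 \geq \lambda^2 ac$ and $c^2 \geq \lambda^2 bd$: the sequence $a, b, c, d$ is log-concave along its parity class, so $b/a \geq c/b \geq d/c$ (ratios of the $\lambda^2$-log-concave sequence are decreasing, with successive ratios dropping by at least a factor $\lambda^2$). This monotonicity of ratios should pin down the extremal case. Concretely, I would set $t = c/b$ and $s = b/a$, $u = d/c$, use $s \geq \lambda^2 t$ and $t \geq \lambda^2 u$ (wait — rather $s/t \ge \lambda^2$ and $t/u \ge \lambda^2$), rewrite the target inequality purely in terms of $s, t, u$ after dividing through, and minimize. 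The extremum should occur at the boundary $s = \lambda^2 t$, $u = t/\lambda^2$, leaving a single-variable (in $t$) inequality whose worst case, combined with the defining equation $\lambda^3 = 3\lambda^2 + \lambda + 1$, yields exactly the threshold $\lambda$. Verifying that the one-variable minimization bottoms out at the value $\lambda$ — using the cubic to simplify — is the computational heart of the argument; I expect it to reduce to checking that a specific rational function of $\lambda$ equals $\lambda$ precisely when $\lambda^3 - 3\lambda^2 - \lambda - 1 = 0$, which explains the otherwise mysterious choice of $\lambda$.
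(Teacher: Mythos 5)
Your overall strategy is the right one, and it is essentially the paper's: reduce to four consecutive values $a,b,c,d$ of $p$, use only the two adjacent constraints $b^2\ge\lambda^2 ac$ and $c^2\ge\lambda^2 bd$ (saturated, since increasing $a$ and $d$ only hurts), and optimise the target $(b+c)^2\ge\lambda(a+b)(c+d)$ over the one remaining free ratio $\alpha=c/b$. However, the part of the computation you actually carried out contains an arithmetic error that sends you down the wrong path. The requirement at the symmetric point $b=c=1$, $a=d=1/\lambda^2$ is $(1+\lambda^{-2})^2\le 4/\lambda$, i.e.\ (after dividing by $\lambda$) $\lambda^4-4\lambda^3+2\lambda^2+1\le 0$ — with $+1$, not $-1$, as the constant term. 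This quartic factors as $(\lambda-1)(\lambda^3-3\lambda^2-\lambda-1)$, so it vanishes exactly at the defining root $\lambda$: the symmetric saturated configuration is the \emph{equality} case, not a counterexample. Your sign slip led you to $-(\lambda-1)^2(\lambda+1)<0$ and hence to the false conclusion that the extremum cannot occur at $b=c$; in fact it occurs precisely there.

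The second, more substantive gap is that the "computational heart" you defer is never executed. Once the constraints are saturated ($a=b/(\lambda^2\alpha)$, $d=c\alpha/\lambda^2$ with $\alpha=c/b$), the target inequality becomes, after clearing denominators, a quadratic inequality $\alpha^2+B\alpha+1\ge0$ with $B=-(\lambda^4-2\lambda^3+1)/(\lambda^3-\lambda^2)$; this holds for all $\alpha>0$ iff $B\ge-2$, which rearranges to $(1-\lambda)(\lambda^3-3\lambda^2-\lambda-1)\ge0$ and so holds (with the parabola degenerating to $(\alpha-1)^2\ge0$) exactly when $\lambda$ is the root of the cubic. This single-variable check is the whole proof, and without it — and with the erroneous intermediate conclusion left standing — the proposal does not establish the lemma. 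You also leave untreated the degenerate case $b=0$ or $c=0$ (where the right-hand side vanishes because the support of $p$ is a consecutive run), though that is a minor omission.
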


\begin{remark}
  \label{rem:gen}
  Suppose that $W$ is a convex potential function,
  and let $Y=e^{-W}$.
  If there exists a constant $\lambda(W)>1$
  such that $p*Y$ is $\lambda(W)$-strongly log concave 
  whenever $p$ is $\lambda(W)^2$-strongly log concave,
  then all results in this section generalise to the height 
  function model induced by the potential $W$.
  Following~\cite[Section~6.2]{saumard2014log},
  it is natural to expect that such a constant $\lambda(W)$
  exists whenever $Y$ is $\alpha$-strongly log-concave 
  for some $\alpha>1$. 
  Note that $Y$ is $\alpha$-strongly log-concave 
  for some $\alpha>1$
  if and only if $W$ is uniformly strictly convex,
  that is, a potential of the form $k\mapsto\epsilon k^2+V(k)$
  where $\epsilon>0$ and where $V$ is any convex function.
\end{remark}

\begin{proof}[Proof of Lemma~\ref{lem:lambdasquaredlambda}]
    Suppose that $p$ is supported on odd integers.
    Let $q:=p*X$.
 By shifting the domain of $q$ if necessary (which does not modify its log-concavity properties),
 it suffices to show, without loss of generality, that
    \[
        q(0)^2\geq\lambda q(-2)q(2),
    \]
    which is equivalent to
    \[
        (p(-1)+p(1))^2\geq\lambda(p(-3)+p(-1))(p(1)+p(3)).
    \]
    If $p(-1)$ or $p(1)$ equals zero, then it is easy to see that the
    right hand side must be zero, and we are done.  Otherwise, let
    $\alpha:=p(1)/p(-1)$.  By multiplying $p$ by a constant if
    necessary, we suppose that $p(-1)=\alpha\lambda^2$ and
    $p(1)=\alpha^2\lambda^2$.  Now $\lambda^2$-strong log-concavity of
    $p$ implies that $p(-3)\leq 1$ and $p(3)\leq\alpha^3$.  For the
    sake of deriving $\lambda$-strong log-concavity, we may take these
    inequalities for equalities.  Thus, it now becomes our goal to
    demonstrate that
    \[
        (1+\alpha)^2\alpha^2\lambda^4\geq \lambda(1+\alpha\lambda^2)\alpha^2(\lambda^2+\alpha)
    \]
    for any $\alpha>0$, for the given value of $\lambda$.
    Rearranging gives
    \[
        \alpha^2-\frac{\lambda^4-2\lambda^3+1}{\lambda^3-\lambda^2}\alpha+1\geq 0.
    \]
    This parabola in $\alpha$ does not take negative values for positive $\alpha$ whenever
    \[
        -\frac{\lambda^4-2\lambda^3+1}{\lambda^3-\lambda^2}\geq-2,
    \]
    that is,
    \[
        (1-\lambda)(\lambda^3-3\lambda^2-\lambda-1)\geq 0.
    \]
    This inequality holds trivially true when $\lambda$ is a zero of the factor on the right.
\end{proof}

\begin{lemma}
    \label{lemma:finite_loc}
    Let $\tree=(\V,\E)$ denote an irreducible tree and $x\in\V$ some distinguished vertex.
    Then for any $\Lambda\subset\subset\V$ and $b\in\Hom(\tree,\Z)$,
    the law of $h(x)$ in the measure $\gamma_\Lambda(\blank,b)$ is $\lambda^2$-strongly log-concave.
\end{lemma}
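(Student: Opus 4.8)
The strategy is to root $\tree$ at $x$ and compute the law of $h(x)$ by a transfer-matrix recursion down the tree, the key inputs being Lemma~\ref{lem:lambdasquaredlambda} (convolution with $X$) and Proposition~\ref{proposition:product_concavity} (products), together with the fact that once $\tree$ is rooted every vertex has at least two children.

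First I would dispose of the trivial case $x\notin\Lambda$: then $h(x)=b(x)$ almost surely and the law is a Dirac mass, which is $\lambda^2$-strongly log-concave because its support is a single point. So assume $x\in\Lambda$, root $\tree$ at $x$, and for $v\in\V$ let $T_v$ be the subtree of $v$ and its descendants. Since $\Lambda$ is finite, the set $U$ of vertices $v$ with $T_v\cap\Lambda\ne\emptyset$ is finite, contains $x$, forms a subtree of $\tree$ rooted at $x$, and contains every child of each of its members; let $W$ be $U$ together with all children of vertices in $U$, so that the leaves of $W$ are vertices all of whose descendants lie outside $\Lambda$. For $v\in W$ define $Z_v\colon\Z\to\Z_{\ge0}$ by letting $Z_v(k)$ be the number of graph homomorphisms on $T_v$ that agree with $b$ on $T_v\setminus\Lambda$ and equal $k$ at $v$. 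Then the law of $h(x)$ under $\gamma_\Lambda(\blank,b)$ is $Z_x$ after normalisation (the total mass is finite and positive since $b$ itself is one such homomorphism), and a case check on the children of $v$ gives the recursion $Z_z=\delta_{b(z)}$ at the leaves $z$ of $W$ and
\[
  Z_v(k)=\mathbf 1[\,v\in\Lambda\ \text{or}\ k=b(v)\,]\cdot\prod_{i=1}^m\bigl(Z_{y_i}(k-1)+Z_{y_i}(k+1)\bigr)
\]
at an internal vertex $v\in W$ with children $y_1,\dots,y_m$ in $W$, where $m\ge2$ always and $m\ge3$ when $v=x$, because $\tree$ has minimum degree three.

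The heart of the argument is then an induction from the leaves of $W$ showing that $Z_v$ is $\lambda^2$-strongly log-concave for every $v\in W$. The leaf case is immediate. For the inductive step, observe that $Z_{y_i}(k-1)+Z_{y_i}(k+1)=2(Z_{y_i}*X)(k)$, which by the inductive hypothesis and Lemma~\ref{lem:lambdasquaredlambda} is a positive multiple of a $\lambda$-strongly log-concave mass function; these factors, for $i=1,\dots,m$, are all supported on one and the same parity class (any two children of $v$ are at graph distance two), and their supports share the point $b(v)$ since $b$ restricted to $T_v$ shows $Z_v(b(v))\ge1$. Hence $m-1$ applications of Proposition~\ref{proposition:product_concavity} show that $\prod_i(Z_{y_i}*X)$ is $\lambda^m$-strongly log-concave, and since $\lambda>1$ and $m\ge2$ it is in particular $\lambda^2$-strongly log-concave. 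Finally $Z_v$ equals this product when $v\in\Lambda$, and its restriction to the single point $b(v)$ when $v\notin\Lambda$; in either case $Z_v$ is $\lambda^2$-strongly log-concave, and taking $v=x$ finishes the proof.

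The one point requiring genuine care --- and the step I expect to be the main obstacle --- is that the very definition of $\alpha$-strong log-concavity presupposes that the support is a block of consecutive integers of a single parity, and this must be threaded through the recursion, being also a standing hypothesis of Lemma~\ref{lem:lambdasquaredlambda} and Proposition~\ref{proposition:product_concavity}. This is handled by the interval structure: a leaf contributes a singleton; the map $k\mapsto Z_{y_i}(k-1)+Z_{y_i}(k+1)$ sends a support-interval $\{a,a+2,\dots,c\}$ to the interval $\{a-1,a+1,\dots,c+1\}$ of the opposite parity; a finite intersection of parity-class intervals is again an interval, nonempty by the witness $b(v)$ above; and intersecting with $\{b(v)\}$ again yields an interval. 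Moreover each $Z_v$ is strictly positive throughout its support, being a product of factors each strictly positive on its own support, so the ``consecutive support'' requirement survives at every stage. It is worth noting that the argument closes precisely because the exponent two in $\lambda^2$ matches the minimal number two of children --- which is also why, as the paper remarks, the statement fails when $\tree=\Z$.
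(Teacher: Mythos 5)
Your proof is correct and is essentially the paper's own argument in different clothing: your subtree partition functions $Z_v$ are just unnormalised versions of the paper's marginals $p_y$ of the cut measures $\mu_k$, your recursion is exactly the paper's recursion~\eqref{eq:recursion}, and both inductions rest on Lemma~\ref{lem:lambdasquaredlambda}, Proposition~\ref{proposition:product_concavity}, and the fact that minimum degree three gives every vertex at least two children once the tree is rooted at $x$. Your explicit bookkeeping of the parity-class/consecutive-support hypotheses (via the common point $b(v)$) is a detail the paper leaves implicit, but it does not change the route.
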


\begin{proof}
    The proof idea is as follows.  For each $y\in\V$ distinct from
    $x$, there is a unique edge incident to $y$ which points in the
    direction of $x$.  We will let $p_y$ denote the law of $h(y)$ in the
    measure $\gamma_\Lambda(\blank,b)$, except that to define this
    measure we pretend that this special edge is not there.  (In
    particular, when $y\neq z$, $p_y$ and $p_z$ are marginals of
    distinct measures on height functions.)  This yields a recursive
    relation (see~\eqref{eq:recursion} below) which, in combination
    with Proposition~\ref{proposition:product_concavity} and
    Lemma~\ref{rem:gen}, allows us to derive the asserted
    concentration.  It is important to observe that the Markov
    property is very strong on trees: removing edges induces
    independence of the model on the connected components of the
    remaining graph.

    Fix $\Lambda$ and $b$. If $x\not\in\Lambda$, then the
    distribution of $h(x)$ is a Dirac mass at $b(x)$, which means in
    particular that it is $\lambda^2$-strongly log-concave, and we are
    done. Let
    \[
        \Lambda_k:=\{y\in\V:d(x,y)<k\},
    \]
    and write $\E_k$ for the set of edges which are incident to
    $\Lambda_k$.  Write $\mu_k$ for the uniform measure on the set of
    functions $h\in\Z^{\V\setminus\Lambda_k}$ such that $h$ equals $b$
    on the complement of $\Lambda\cup\Lambda_k$, and such that
    $h(z)-h(y)\in\{\pm1\}$ for all $yz\in\E\setminus\E_k$.  Remark
    that $\mu_0=\gamma_\Lambda(\blank,b)$.
    For each $y\in\V$, we let $p_y$ denote
    the law of $h(y)$ in $\mu_k$ where $k:=d(x,y)$, see 
    Fig.~\ref{fig:1} (this is equivalent to the informal definition of $p_y$ given above).  It suffices to prove the claim that $p_y$ is
    $\lambda^2$-strongly log-concave for any $y\in\V$ (the lemma
    follows by taking $y=x$).
\begin{figure}
\centering
\includegraphics[scale=.6]{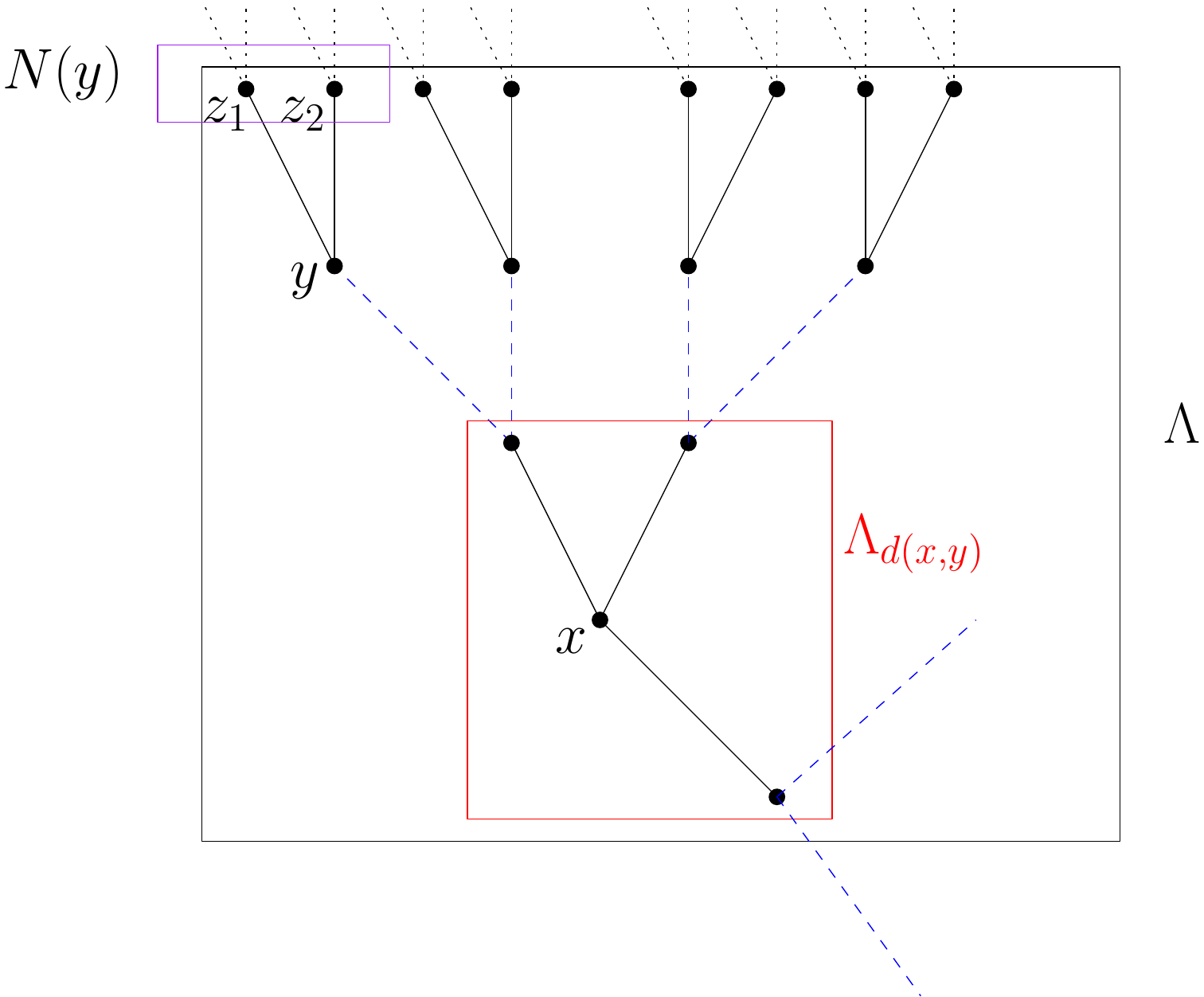}
\caption{A graphical explanation of the procedure in the proof of
  Lemma~\ref{lemma:finite_loc}, in a case where $\mathbb T$ is a
  regular tree of degree $3$. Only the portion of the tree inside
  $\Lambda\cup \Lambda_{d(x,y)}$ is drawn; outside, the configuration
  coincides with $b$. In defining the measure $\mu_k$ with $k=d(x,y)$
  (this measure is defined on the complement of $\Lambda_k$), the blue
  dashed edges are removed and the tree splits into a finite number of
  connected components. Under $\mu_k$, the height at $y$ is
  independent of the height on the components that do not contain
  $N(y)=\{z_1,z_2\}$
  (that is, the connected components not containing $y$). Its law $p_y$ is obtained by conditioning
  two independent random variables having distributions $p_{z_1}*X$ and $p_{z_2}*X$
  to be equal, which gives~\eqref{eq:recursion}. Here, the definition of $X$ comes from the
  fact that height the gradients along the edges $yz_1$ and $yz_2$
  take only values $\pm1$ with equal likelihood.}\label{fig:1}
\end{figure}

     We prove the statement by induction on the distance $k:=d(x,y)$.
      If $d(x,y)\ge d_0$ for a suitable choice of $d_0$ depending on $\Lambda$, then $\Lambda_k\supset \Lambda$. In this case,  $\mu_k$ is the Dirac measure on $b|_{\V\setminus\Lambda_k}$
      and $p_y$ is also a Dirac measure, hence $\lambda^2$-strongly log-concave. 
    We prove the full claim by inductively lowering $d(x,y)$ from $d_0$ to $0$.
Fix $y$ with $d(x,y)=d<d_0$. 
    Let $N(y)$ denote the neighbours of $y$ which are at  distance $d+1$ from $x$.  Note that $|N(y)|\geq 2$ because the minimum degree of the tree is at least three.
    Moreover, it is easy to see (as explained in the caption of Fig.~\ref{fig:1}) that
    \begin{equation}
        \label{eq:recursion}
        p_y=\frac1Z \prod_{z\in N(y)}(p_z*X).
    \end{equation}
    By induction, all the functions $p_z$ are $\lambda^2$-strongly log-concave,
    so that Proposition~\ref{proposition:product_concavity} and Lemma~\ref{rem:gen} imply that $p_y$ is $\lambda^2$-strongly log-concave.
\end{proof}

The proof of Theorem~\ref{thm:loc} now follows from abstract arguments which may appear technical but which are more or less standard.
First, we introduce height offset variables.

\begin{definition}[Height offset variables,~\cite{SHEFFIELD}]
    A \emph{height offset variable} is a random variable
    $H:\Omega\to\R\cup\{\infty\}$ which is $\calT$-measurable 
    and satisfies $H(h+a)=H(h)+a$ for all $h\in\Omega$ and $a\in\Z$.
    The variable $H$ is called a \emph{height offset variable}
    for some gradient Gibbs measure $\mu$ whenever $\mu(H<\infty)=1$.
\end{definition}

Remark that $\{H<\infty\}\in\calT^\nabla$ because the event is unchanged if
we add a constant to each height function,
which means that it depends only on the height
gradients.
This is perhaps surprising because $H$ is not $\calT^\nabla$-measurable. Height
offset variables capture the following idea.  Let $\mu$ denote a gradient Gibbs
measure. This measure may be turned into a non-gradient measure by a
procedure called \emph{anchoring}, that is, by defining $h(x):=0$ for
some fixed vertex $x\in\V$. This preserves the DLR equations
$\mu\gamma_\Lambda=\mu$ for all $\Lambda\subset\subset\V$ which do not
contain $x$, but not for sets $\Lambda$ that do contain $x$. Thus, we
would like instead to \emph{anchor at the tail}. This is exactly what
height offset variables do. For height functions on $\Z^d$, height
offset variables, when they exist, often arise as the $n\to\infty$
limit of the empirical average of the height function over concentric
balls of radius $n$.

\begin{lemma}
    If there exists a height offset variable $H$ for some gradient Gibbs measure $\mu$,
    then $\mu$ is the restriction of a Gibbs measure to the gradient sigma-algebra.
\end{lemma}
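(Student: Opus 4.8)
The plan is to use the height offset variable $H$ to ``anchor at infinity'': given the gradient Gibbs measure $\mu$, I would define a non-gradient measure $\bar\mu$ on $(\Omega,\calF)$ by declaring that a sample $h$ is obtained by first sampling the gradient field from $\mu$ and then fixing the additive constant so that $H(h)=0$ (which is possible on the full-measure event $\{H<\infty\}$). Concretely, for a fixed representative $h_0$ of the gradient configuration one sets $h:=h_0-H(h_0)$; since $H(h_0+a)=H(h_0)+a$, this does not depend on the choice of representative, so $\bar\mu$ is a well-defined measure on actual height functions whose restriction to $\calF^\nabla$ is $\mu$. (If one prefers $\Z$-valued heights, replace $H$ by $\lfloor H\rfloor$; the fractional part is $\calT^\nabla$-measurable and can be carried along, or absorbed — this is a cosmetic point.)

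The substantive step is to verify that $\bar\mu$ satisfies the DLR equations $\bar\mu\gamma_\Lambda=\bar\mu$ for every $\Lambda\subset\subset\V$. First I would check the equation on $\calF^\nabla$: here it holds because the $\gamma_\Lambda$ act on gradients exactly as they do in $\mu$, and $\mu\gamma_\Lambda=\mu$ by hypothesis. It then remains to check consistency of the additive constant. The key observation is that the local update $\gamma_\Lambda(\blank,\cdot)$ only resamples $h$ inside the finite set $\Lambda$ and leaves $h|_{\V\setminus\Lambda}$ unchanged; hence it does not change the value of any $\calT$-measurable function, and in particular it does not change $H$. So if $h$ satisfies $H(h)=0$ and $h'$ is obtained from $h$ by a $\gamma_\Lambda$-update, then $H(h')=0$ as well, which is precisely the property that pins down $\bar\mu$. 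Combining these two facts — the gradient field has the right law after the update, and the anchoring constant is preserved by the update — gives $\bar\mu\gamma_\Lambda=\bar\mu$.

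The main obstacle, and the only place where care is genuinely needed, is the measurability and well-definedness of the anchoring map $h\mapsto h-H(h)$: one must confirm that this is a measurable map $(\Omega,\calF^\nabla)\to(\Omega,\calF)$ on the event $\{H<\infty\}$, so that pushing $\mu$ forward along it produces a bona fide element of $\calP(\Omega,\calF)$, and that the resulting measure is supported on $\Hom(\tree,\Z)$. The first point follows because $H$ is $\calT$-measurable hence $\calF$-measurable, and $(h,a)\mapsto h-a$ is continuous; the descent to $\calF^\nabla$ uses exactly the covariance relation $H(h+a)=H(h)+a$, which makes $h-H(h)$ a function of the gradient alone. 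The support claim is immediate since subtracting a constant from a graph homomorphism (or, in the integer case, an integer constant) yields a graph homomorphism, and $\mu$ is supported on $\Hom(\tree,\Z)$. Finally, I would note in one line that if the original gradient measure $\mu$ is extremal then the constructed Gibbs measure can be taken extremal too, since any extremal decomposition of $\bar\mu$ restricts to a decomposition of $\mu$ on $\calF^\nabla$; this is the parenthetical remark already flagged in Definition~\ref{def:loc}.
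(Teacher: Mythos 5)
Your proof is correct and takes essentially the same route as the paper: anchor the gradient measure at the tail via $h\mapsto h-\lfloor H(h)\rfloor$, using the covariance $H(h+a)=H(h)+a$ for well-definedness on $\calF^\nabla$ and the tail-measurability of $H$ (hence its invariance under local resampling) for the DLR equations. You spell out the DLR verification and the measurability points in more detail than the paper, which simply asserts that ``anchoring at the tail'' preserves the DLR equations, but the underlying argument is identical.
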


\begin{proof}
    The proof, including more context and background, may be found in Sheffield~\cite{SHEFFIELD},
    but we include it here for completeness.
    Define a new measure $\tilde\mu$ as follows:
    to sample from $\tilde\mu$, sample first $h$ from $\mu$,
    then output $\tilde h:=h-\lfloor H(h)\rfloor$.
    Since $H$ is a height offset variable, $\tilde h$
    is invariant under replacing $h$ by $h+a$ for any $a\in\Z$.
    In other words, $\tilde h$ is invariant under the choice 
    of the height function $h$ to represent its gradient.
    This means that $\tilde\mu$ is well-defined as a probability measure in
    $\calP(\Omega,\calF)$, and $\tilde h(x)$ is a $\tilde\mu$-measurable function.
    Since $H$ is tail-measurable, $\tilde\mu$ is a Gibbs measure:
    the DLR equations hold because the measure is anchored at the tail, as mentioned above.
    Finally, plainly, the measure $\tilde\mu$ restricted to the gradient sigma-algebra is just $\mu$.
\end{proof}

We are now ready to prove Theorem~\ref{thm:loc}.

\begin{proof}[Proof of Theorem~\ref{thm:loc}]
    The choice of the vertex $x$ is fixed throughout.
    Start with the first part.
    Let $\mu$ denote a gradient Gibbs measure.
    Our goal is to construct a height offset variable.
    Define $\Lambda_k$ as in the proof of Lemma~\ref{lemma:finite_loc}.
    Let $\calF_k^\nabla$ denote the smallest sigma-algebra
    which makes $h(z)-h(y)$ measurable for all $y,z\in\V\setminus\Lambda_k$.
    Note that $(\calF_k^\nabla)_{k\geq 0}$ is a backward filtration
    with $\cap_k\calF_k^\nabla=\calT^\nabla$.
    Write $X_k$ for the random variable defined by
    \[
        X_k:=\mu(h(x)-h(a_k)|\calF_k^\nabla)-(h(x)-h(a_k)),
      \]
      where each $a_k$ is chosen in the complement of $\Lambda_k$.
      It is straightforward to work out that this definition is 
      independent of the choice of $a_k$,
      precisely because the height gradient outside $\Lambda_k$ is measurable 
      with respect to $\calF_k^\nabla$.
        Thus, $X_k$ tells
      us, given the current height function $h$, how much we expect
      $h(x)$ to increase once we resample all heights at a distance
      strictly below $k$ from $x$.  Remark that $X_0=0$.  It suffices
      to demonstrate that $X_k$ converges almost surely to some
      $\calF^\nabla$-measurable limit $X_\infty$,
      because \[H:=X_\infty+h(x)\] is the desired height offset
      variable in that case (it is clear that it is indeed tail measurable).

    For fixed $n$, 
    the sequence $(X_k-X_n)_{0\leq k\leq n}$ is a backward martingale in the backward 
    filtration $(\calF_k^\nabla)_{0\leq k\leq n}$.
    By orthogonality of martingale increments,
    this means that
    \[
        \|X_n\|_2^2
        =
        \|X_k\|_2^2
        +
        \|X_k-X_n\|_2^2
    \]
    in $L^2(\mu)$ for any $0\leq k\leq n$.
    But by Lemma~\ref{lemma:finite_loc}, \[\|X_n\|_2^2
    =\int\Var_{\gamma_{\Lambda_n}(\blank,b)}[h(x)]d\mu(b)
    \le C(\lambda^2)
    \]
    independently of $n$, where the constant $C(\lambda^2)$ comes from Remark~\ref{rem:var}.
    Therefore $(X_k)_{k\geq 0}$ is Cauchy in $L^2(\mu)$ and converges to some limit 
    $X_\infty$ in $L^2(\mu)$.
    This convergence can be upgraded to almost sure convergence by using Doob's upcrossing inequality or 
    the backward martingale convergence theorem,
    applied to the backward martingale $(X_k-X_\infty)_{k\geq 0}$.
    
    Now focus on the second part.
    Let $\mu$ denote an extremal Gibbs measure.
    Define $Y_k$ by $X_k+h(x)$;
    this is simply the expected value of $h(x)$ given the values of $h$ on 
    the complement of $\Lambda_k$.
    By the first part,
    $Y_k$ converges almost surely to some limit $H$.
    Moreover, since $\mu$ is extremal, $H$ is almost surely equal to some 
    constant $a\in\R$.
    The tower property implies that $\mu(h(x))=a$.
    With Fatou's lemma, we may estimate the variance by
    \[
        \Var_\mu [h(x)]
        \leq\lim_{k\to\infty}\mu((h(x)-Y_k)^2)
        =\lim_{k\to\infty}\|X_k\|_2^2.   
    \]
    But $\|X_k\|_2^2\leq C(\lambda^2)$ as observed above, which proves
    the second part of the theorem with $C=C(\lambda^2)$.  (In fact,
    the inequality on the left in the display turns into an equality
    with these \emph{a posteriori} bounds, but this is not important
    to us.)
\end{proof}

\section{Complete classification of Gibbs measures for monotone height functions}
\label{sec:complete}

Let $\mu$ denote an extremal gradient Gibbs measure throughout this section.
For $x\in\V$, let $D(x)\subset\V$ denote the set of descendants of $x$,
which by convention includes $x$.
For $k\geq 0$, let $D_k(x)$ denote the descendants which are at a distance $k$ from $x$,
and let $E_k(x)$ denote the set of edges $yz\in\vec\E$ with $y\in D_k(x)$.
This means that $E_0(x)$ contains one edge, which connects $x$ with its parent $p(x)$.
Define a sequence of random variables $(X_k)_{k\geq 0}$
by
\[\label{eq:Xk}
    X_k:=\min_{yz\in E_k(x)}h(z)-h(y).
\]
For convenience we shall write $\nabla h|_V$ 
for the map
\[
    V\times V\to\Z,\,(y,z)\mapsto h(z)-h(y)
\]
for any $V\subset\V$.

\begin{lemma}
  \label{lem:definetti}
    Under $\mu$, the sequence $(X_k)_{k\geq 0}$ is i.i.d.\ with distribution
    $\Geom(\alpha)$ for some $\alpha\in(0,\infty]$.
    Moreover, this sequence is independent of $\nabla h|_{\V\setminus D(x)}$.
\end{lemma}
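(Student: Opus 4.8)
The plan is to exploit the tree structure together with extremality, via a de Finetti-type argument. First I would set up the key decomposition: for each $k\geq 0$, conditioning on $\nabla h|_{\V\setminus D(x)}$ (in particular on the gradients across the edges of $E_k(x)$ and everything ``above'' them) makes the gradients inside the distinct subtrees hanging below the vertices of $D_k(x)$ conditionally independent, by the strong Markov property on trees (removing the edges of $E_k(x)$ disconnects each of these subtrees from the rest). The DLR equations with the locally uniform specification then force the following recursive description: given $X_{k+1}$, each edge $yz\in E_k(x)$ carries an increment $h(z)-h(y)$ that is, conditionally, an independent $\Geom$-type variable, and $X_k$ is the minimum of finitely many such increments over $yz\in E_k(x)$; but the minimum of independent geometrics (on $\Z_{\geq 0}$) is again geometric, and — crucially — the law of $X_k$ obtained this way does \emph{not} depend on $X_{k+1}$ once one accounts for the normalisation, because the locally uniform kernel reweights by $\prod \mathbf 1_{h(z)-h(y)\geq 0}$ only and the ``floor'' at $0$ is what matters for the minimum. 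I would make this precise by writing, for $yz\in E_k(x)$, $h(z)-h(y)=X_{k+1}^{(\text{below }y)}+(\text{local increment})$ and checking that conditionally on the configuration outside the subtree below $z$, the increment across $yz$ has a geometric law whose parameter is the same for all edges by the combinatorial symmetry of the locally uniform measure on monotone functions.

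The cleanest route to the i.i.d.\ and $\Geom(\alpha)$ conclusions is then: (i) show by the above Markov/DLR analysis that, for every $k$, the conditional law of $(X_0,\dots,X_{k})$ given $\calF_{k+1}^\nabla$ (the gradient sigma-algebra outside $D_{k+1}(x)$, say) is a fixed product of geometric-type kernels that does not depend on the conditioning beyond a single real parameter; (ii) invoke extremality of $\mu$ — tail-triviality on $\calT^\nabla$ — to kill the residual randomness in that parameter. Concretely, the sequence of conditional parameters is $\calT^\nabla$-measurable (it is a function of the gradient configuration far from $x$ in the ancestor direction), hence $\mu$-a.s.\ constant, call it $\alpha$; this simultaneously gives that each $X_k\sim\Geom(\alpha)$ and that the $X_k$ are independent (the product structure of the conditional law survives). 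The independence from $\nabla h|_{\V\setminus D(x)}$ is then immediate: by construction the conditional law of $(X_k)_{k\geq 0}$ given $\nabla h|_{\V\setminus D(x)}$ is already the $\Geom(\alpha)^{\otimes\mathbb N}$ law with $\alpha$ deterministic.

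I would fill in one combinatorial point carefully: why the locally uniform measure on monotone functions, restricted to a finite subtree and conditioned on its boundary, gives each downward edge increment a geometric tilt with a \emph{common} parameter, and why minimising over $E_k(x)$ preserves the geometric family — this is the computation behind Definition~\ref{def:flow} and the flow condition \eqref{eq:flow_condition}, and it is essentially the statement that $\Geom(\alpha_1)\wedge\cdots\wedge\Geom(\alpha_m)=\Geom(\alpha_1+\dots+\alpha_m)$ for independent geometrics on $\Z_{\geq 0}$. The main obstacle, I expect, is not any single estimate but the bookkeeping needed to pass from the \emph{local} DLR relation (which a priori only controls the conditional law of $(X_0,\dots,X_k)$ given the configuration just outside $D_{k+1}(x)$, with a parameter that could depend on that configuration) to a \emph{global} statement about the whole sequence; this is where tail-triviality must be used, and one has to be careful that the object shown to be tail-measurable — the limiting parameter $\alpha$, or equivalently $\lim_k$ of the conditional expectations — is genuinely $\calT^\nabla$-measurable and not merely $\calT$-measurable. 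Handling the boundary case $\alpha=\infty$ (where $\Geom(\infty)$ is the Dirac mass at $0$, corresponding to $h$ constant on $D(x)$) requires only that one allows $\alpha\in(0,\infty]$ throughout, as the statement does.
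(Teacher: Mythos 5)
There is a genuine gap in your step (i), and it sits at the heart of the argument. You assert that the DLR equations force the conditional law of $(X_0,\dots,X_k)$ given the configuration outside $D_{k+1}(x)$ to be ``a fixed product of geometric-type kernels'', and more locally that each increment $h(z)-h(y)$ is ``conditionally an independent $\Geom$-type variable''. This is false in finite volume: conditioning the locally uniform specification on a boundary configuration produces the \emph{uniform} measure on a finite set of monotone configurations, so the variables $X_0,\dots,X_n$ are exchangeable and, given their sum $a$, uniformly distributed on $\{x\in\Z_{\geq0}^{\{0,\dots,n\}}:x_0+\dots+x_n=a\}$ --- i.e.\ they have the law of i.i.d.\ geometrics \emph{conditioned on their sum}, which is very far from a product law (for instance, the single-site kernel $\gamma_{\{y\}}$ gives $h(y)$ a uniform law on a finite interval, not a geometric one). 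No local DLR computation can produce a geometric marginal; the geometric law only emerges in the infinite-volume limit. Your justification for why the law of $X_k$ ``does not depend on $X_{k+1}$'' (the remark about the indicator reweighting and the ``floor'' at $0$) does not repair this, and the recursion you describe is essentially assuming the conclusion of Lemma~\ref{lem:definetti_flow}.

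The paper's route is precisely designed to get around this: it establishes only the exchangeability and uniform-given-the-sum properties of $(X_0,\dots,X_n)$ from the DLR equations (by conditioning on $\nabla h|_{\V\setminus D^n}$ and on $\nabla h|_{D_k}$ for each $k<n$, which determines the sum but nothing else), then applies de Finetti's theorem together with triviality of $\calT^\nabla$ to upgrade exchangeability to i.i.d., and finally identifies the common marginal as geometric via the $n=1$ case of the uniform-given-sum property, which yields the functional equation $U(x)+U(a-x)=\mathrm{const}$ for $U=\log\P(X_0=\cdot)$ and forces $U$ to be affine. You invoke ``a de Finetti-type argument'' in your opening sentence and you correctly identify the role of tail-triviality in pinning down $\alpha$ and in obtaining independence from $\nabla h|_{\V\setminus D(x)}$, but the proposal never actually establishes exchangeability, and without it neither the i.i.d.\ structure nor the geometric form of the marginal is available. (The fact that a minimum of independent geometrics is geometric is used in the paper only afterwards, in Lemma~\ref{lem:definetti_flow}, to verify the flow condition; it cannot substitute for the de Finetti step here.)
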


\begin{proof}
   For lightness, we drop the argument $x$ in the notations $D(x)$ and $D_k(x)$.
    We first claim that conditional on $\nabla h|_{\V\setminus D}$,
    the distribution of the sequence $(X_k)_{k\geq 0}$ satisfies the following two criteria:
    \begin{enumerate}
        \item It is exchangeable,
        \item Conditional on $X_0+\dots+X_n=a$, the distribution of $(X_0,\dots,X_n)$
        is uniform in
        \[
            \{
                x\in \Z_{\geq 0}^{\{0,\dots,n\}}:x_0+\dots+x_n=a
            \}.
        \]
    \end{enumerate}
    In fact, the second criterion clearly implies the first, which is
    why we focus on proving it.  First, let
    $D^n:=\cup_{0\leq k<n}D_k$.  Write $\mu'$ for the measure $\mu$
    conditioned on $\nabla h|_{\V\setminus D^n}$.  Since $\mu$ is a
    gradient Gibbs measure and since $\mu'$ is only conditioned on the complement 
    of $D^n$, $\mu'$ still satisfies the DLR equation
    $\mu'\gamma_{D^n}=\mu'$.
    Suppose now that we also condition
    $\mu'$ on $\nabla h|_{D_k}$ for every $0\leq k<n$. This means that
    we are adding the information on the height difference within each
    of sets $D_k$, \emph{but not on the height gradients between a
      vertex in $D_k $ and a vertex in $D_{k'}$ for $k\ne k'$}. 
      Observe that this means that the gradient of the height function $h$ is completely determined
      if we also
    added the information of the variables $X_0,\dots,X_{n}$.
    The random variables $X_0,\dots,X_{n}$
    are nonnegative, and their sum can be determined from the
    information we conditioned on (that is $\nabla h|_{\V\setminus D^n}$ and  $\nabla h|_{D_k}$ for every $0\leq k<n$).  The values of these random
    variables are not constrained in any other way.  Since the
    specification induces local uniformity, the second criterion
    follows.

    Continue working in the measure $\mu$ conditioned on
    $\nabla h|_{\V\setminus D}$.  De Finetti's theorem implies that
    the sequence $(X_k)_{k\geq 0}$ is i.i.d.\ once we condition on
    information in the tail of $(X_k)_{k\geq 0}$.  Since $\mu$ is
    extremal, this tail is trivial, which means that the sequence
    $(X_k)_{k\geq 0}$ is i.i.d..  Moreover, it is easy to see that the
    second criterion implies that the distribution of $X_0$ is
    $\Geom(\alpha)$ for some $\alpha\in(0,\infty]$.  In
      fact, taking $n=1$ and letting $U(x):=\log \mathbb P(X_0=x)$,
      the second criterion implies that $U(x)+U(a-x)$ is constant for $0\le x\le a$,
      so that $U(x+1)-U(x)=U(a-x)-U(a-x-1)$ for $0\le x<a$. Since $a$ is arbitrary, $U$ is affine and therefore $X_0$ is a geometric random variable.

    To prove
      the full lemma, we must demonstrate that this parameter $\alpha$
      does not depend on the information in
      $\nabla h|_{\V\setminus D(x)}$ that we conditioned on.  Indeed,
      if $\alpha$ did depend on this information, then this would
      imply again that the tail sigma-algebra of $(X_k)_{k\geq 0}$ is
      not trivial, contradicting extremality of $\mu$.
\end{proof}

\begin{lemma}
\label{lem:definetti_flow}
  There exists a map $\phi:\vec\E\to(0,\infty]$ such that under the extremal
  gradient measure $\mu$,
    the gradients $(h(y)-h(x))_{xy\in\vec\E}$ are independent with
    distribution
    $h(y)-h(x)\sim\Geom(\phi_{xy})$.
    Moreover, $\phi$ is a flow.
  \end{lemma}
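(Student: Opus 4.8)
The plan is to read everything off Lemma~\ref{lem:definetti}, applied separately with each vertex of $\V$ as the distinguished vertex; the only genuinely new ingredient is a short combinatorial argument that upgrades a family of ``one versus the rest'' independence statements into mutual independence of all the edge increments. Throughout I write $G_x:=h(p(x))-h(x)$ for the increment across the edge $xp(x)\in\vec\E$ and recall that $x\mapsto xp(x)$ is a bijection from $\V$ onto $\vec\E$, so the gradients $(h(y)-h(x))_{xy\in\vec\E}$ are exactly the family $(G_x)_{x\in\V}$.

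First I would define $\phi$ and fix the marginals. Applying Lemma~\ref{lem:definetti} with distinguished vertex $x$: since $E_0(x)=\{xp(x)\}$ we have $X_0=G_x$, so $G_x\sim\Geom(\alpha_x)$ for some $\alpha_x\in(0,\infty]$, and I set $\phi_{xp(x)}:=\alpha_x$. This defines $\phi\colon\vec\E\to(0,\infty]$, and the increment across $xy\in\vec\E$ (with $y=p(x)$) then has law $\Geom(\phi_{xy})$ by construction. For independence, the starting point is the second assertion of Lemma~\ref{lem:definetti}, which makes $G_x=X_0$ independent of $\nabla h|_{\V\setminus D(x)}$. I would note that if $y\notin D(x)$ then $p(y)\notin D(x)$ as well (otherwise $y$, being a child of $p(y)$, would be a descendant of $x$), so every $G_y$ with $y\notin D(x)$ is a $\nabla h|_{\V\setminus D(x)}$-measurable function; hence $G_x$ is independent of $\sigma\bigl((G_y)_{y\in\V\setminus D(x)}\bigr)$, for every $x$. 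To pass to mutual independence of the whole family $(G_x)_{x\in\V}$ it suffices to treat an arbitrary finite $F\subset\V$: enumerate $F=(y_1,\dots,y_m)$ so that each vertex appears after all of its ancestors (a topological sort of the ancestor relation), which guarantees that for every $i$ none of $y_1,\dots,y_{i-1}$ lies in $D(y_i)$; then $(G_{y_1},\dots,G_{y_{i-1}})$ is $\nabla h|_{\V\setminus D(y_i)}$-measurable, hence independent of $G_{y_i}$, and the chain rule yields joint independence of $(G_{y_1},\dots,G_{y_m})$.

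Finally I would extract the flow property by comparing two descriptions of a generation-minimum. Fix $x\in\V$ and apply Lemma~\ref{lem:definetti} once more with distinguished vertex $x$: the set $E_1(x)$ consists of the edges $yx$ as $y$ runs over the children of $x$, so $X_1=\min_{y:\,yx\in\vec\E}G_y$, and the lemma gives $X_1$ the same law $\Geom(\alpha_x)$ as $X_0=G_x$. On the other hand, by the independence and marginals just obtained, $\min_{y:\,yx\in\vec\E}G_y$ is a minimum of independent $\Geom(\phi_{yx})$ random variables; since $\P(G\ge k)=e^{-k\alpha}$ for $G\sim\Geom(\alpha)$ (with the evident conventions when $\alpha=+\infty$), this minimum has law $\Geom\bigl(\sum_{y:\,yx\in\vec\E}\phi_{yx}\bigr)$. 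Equating the two forces $\sum_{y:\,yx\in\vec\E}\phi_{yx}=\alpha_x=\phi_{xp(x)}$, which is precisely the flow relation \eqref{eq:flow_condition}; since a geometric law has strictly positive parameter, $\phi$ takes values in $(0,\infty]$, and so $\phi$ is a flow. No step here is a serious obstacle; the one point demanding care is the independence upgrade --- using that Lemma~\ref{lem:definetti} delivers independence from all of $\nabla h|_{\V\setminus D(x)}$ rather than merely from the increments outside $D(x)$, ordering $F$ correctly, and keeping routine track of the $\infty$-valued edges, which stay consistent throughout.
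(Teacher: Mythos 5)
Your proof is correct and follows essentially the same route as the paper's: define $\phi$ from the marginals supplied by Lemma~\ref{lem:definetti}, upgrade the ``one increment versus everything outside $D(x)$'' independence to mutual independence of all edge gradients, and obtain the flow relation from the fact that a minimum of independent geometrics is geometric with the summed parameter (the paper phrases this via local uniformity under $\gamma_{\{x\}}$, you via $X_0\overset{d}{=}X_1$, which is the same equivalence the paper states as ``$(X_k)_k$ is i.i.d.\ if and only if $\phi$ is a flow''). You merely make explicit, via the topological-sort chain rule, the step the paper dismisses as ``readily''.
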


\begin{proof}
    Let, as above, $p(x)$ denote the parent of a vertex $x$.
    The previous lemma implies that there exists some map $\phi:\vec\E\to(0,\infty]$
    such that for each $x\in\V$, we have:
    \begin{enumerate}
        \item $h(p(x))-h(x)\sim\Geom(\phi_{xp(x)})$,
        \item $h(p(x))-h(x)$ is independent of $\nabla h|_{\V\setminus D(x)}$.
    \end{enumerate}
    This implies readily that the family $(h(y)-h(x))_{xy\in\vec\E}$
    is independent.  It suffices to demonstrate that $\phi$ is
    necessarily a flow.  The flow condition~\eqref{eq:flow_condition}
    for a fixed vertex $x\in\V$ follows immediately from the definition of
    the probability kernel $\gamma_{\{x\}}$, which induces local
    uniformity.  In particular, it is important to observe that the
    minimum of $n$ independent random variables having the
    distributions $(\Geom(\alpha_k))_{1\leq k\leq n}$ respectively, is
    precisely $\Geom(\alpha_1+\dots+\alpha_n)$, so that the sequence $(X_k)_k$ is i.i.d.\ if and only if $\phi$ is a flow.
\end{proof}

\begin{proof}[Proof of Theorem~\ref{thm:classification}]
  By the previous lemma it suffices to demonstrate that for each flow
  $\phi$, the measure $\mu_\phi$ is an extremal gradient Gibbs
  measure.  The reasoning at the end of the previous proof implies
  that $\mu_\phi$ satisfies $\mu_\phi\gamma_{\{x\}}=\mu_\phi$ for any
  $x\in\V$. In other words, for any vertex $x$, the measure $\mu_\phi$
  is invariant under the ``heat bath'' Glauber update where the value
  of the height at the single vertex $x$ is resampled according to the specification,
  and conditional on the heights of all other vertices.
  On the other hand, the state space of configurations
  coinciding with some fixed boundary condition on the complement of a fixed finite domain,
  is connected under
  updates at a single vertex.
  This follows from a standard application of the Kirszbraun theorem,
  see for example~\cite[Proof of Lemma~13.1]{LamTas}.
  It is well-known that this implies the DLR equation $\mu_\phi\gamma_\Lambda=\mu_\phi$
  for fixed $\Lambda$,
  simply because the measure on the left may be approximated by 
  composing $\mu_\phi$ with many probability kernels of the form 
  $(\gamma_{\{x\}})_{x\in\Lambda}$.
  In other words, $\mu_\phi$ is a gradient Gibbs measure.
  It suffices to show that
  it is also extremal.  If $\mu_\phi$ was not extremal, then
  the previous lemma implies that it decomposes as a combination of
  other flow measures.  But since a geometric distribution cannot be
  written as the non-trivial convex combination of geometric
  distributions, we arrive at a 
  contradiction.
\end{proof}

\begin{proof}[Proof of Theorem~\ref{thm:flow_localised}]
    Recall that every extremal gradient Gibbs measure is a flow measure.
    Let $\phi$ denote a flow and consider the flow measure $\mu_\phi$.
    Fix $x_0\in\V$, and define a sequence $(x_k)_{k\geq 0}\subset\V$
    by $x_{k+1}:=p(x_k)$.
    Suppose first that the localisation criterion is satisfied, that is, that the
    sum~\eqref{eq:crit} converges.
 We observe that $        h(x_n)-h(x_0)    
$ is the sum of $n$ independent Geometric random variables with their respective parameters given by $(\phi_{x_kp(x_k)})_{0\le k<n}$.
    A random variable $X\sim\Geom(\alpha)$ satisfies
    \[
        \E(X)=\frac{e^{-\alpha}}{1-e^{-\alpha}}.    
    \]
    Since $\phi_{x_kp(x_k)}$ is increasing in $k$
    and $\phi_{x_0p(x_0)}>0$, we observe that the sequence
    \[
        h(x_n)-h(x_0)    
    \]
    converges almost surely as $n\to\infty$ if the localisation
    criterion is satisfied. (By looking at the Laplace transform, it is
    also easy to see that the sequence $ h(x_n)-h(x_0) $ diverges
    almost surely otherwise,
    but this fact is not used in this proof.)  If this sequence converges, then
    $\lim_{n\to\infty}h(x_n)$ is a height offset variable, so that
    $\mu_\phi$ is localised.

    Finally, we must prove the converse statement.
    Assume that the
    sum in the localisation criterion diverges;
    our goal is now to prove delocalisation,
    which is achieved through a slightly different route.
    By Definition~\ref{def:loc}, it suffices to prove that
    \[
      \lim_{\Lambda\uparrow\V} \int
      \Var_{\gamma_\Lambda(\blank,b)}[h(x_0)]d\mu(b)=\infty.\] The
    integral is increasing in $\Lambda$
    due to orthogonality of martingale increments.  Write
    $\Lambda=\Lambda_{n,k}:=D^{n+k}(x_n)\setminus\{x_n\}\subset\V$,
    which by definition contains the descendants of $x_n$ from
    generation $1$ to $n+k-1$.  We prove a lower bound on the quantity
    in the previous display, by taking first $k\to\infty$ and then
    $n\to\infty$.  For fixed $n$, we can essentially repeat the
    arguments in the proof of Lemma~\ref{lem:definetti} to see that
    the following claim is true.

    \begin{claim*}
      For $\mu$-almost every $b$,
      the \emph{quenched} law of
      $\nabla h|_{D^{n+1}(x_n)}$ in $\gamma_{\Lambda_{n,k}}(\blank,b)$
      converges weakly as $k\to\infty$ to the law of 
      $\nabla h|_{D^{n+1}(x_n)}$ in the unconditioned measure
      $\mu$.
  \end{claim*}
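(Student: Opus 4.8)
The plan is to reduce the Claim to an independence statement under $\mu$, and then to establish that independence by the method of Lemma~\ref{lem:definetti}. The gradient field $\nabla h|_{D^{n+1}(x_n)}$ is a function of the gradients along the edges of the subtree $D^{n+1}(x_n)$, hence takes values in a countable set (and in general one argues one finite coordinate set at a time), so weak convergence is equivalent to pointwise convergence of the probability masses. Fix a monotone $g$ on $D^{n+1}(x_n)$, put $F:=\mathbf 1_{\{\nabla h|_{D^{n+1}(x_n)}=\nabla g\}}$, and write $\nu_k^b:=\gamma_{\Lambda_{n,k}}(\blank,b)$. By the DLR equations for $\mu$ together with properness of the specification, the map $b\mapsto\nu_k^b[F]$ is a version of $\mathbb E_\mu[F\mid\mathcal A_k]$ with $\mathcal A_k:=\sigma(\nabla h|_{\V\setminus\Lambda_{n,k}})$, and the $\mathcal A_k$ decrease in $k$ since the $\Lambda_{n,k}$ increase. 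By the backward martingale convergence theorem, $\nu_k^b[F]\to\mathbb E_\mu[F\mid\mathcal A_\infty](b)$ for $\mu$-a.e.\ $b$, where $\mathcal A_\infty:=\bigcap_k\mathcal A_k$; taking a common null set over the countably many $g$ then gives the a.s.\ statement. Hence the Claim is equivalent to $\mathbb E_\mu[F\mid\mathcal A_\infty]=\mathbb E_\mu[F]$, i.e.\ to $\nabla h|_{D^{n+1}(x_n)}\perp_\mu\mathcal A_\infty$.

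To prove this I would split $\mathcal A_\infty$ in two. Since $\bigcup_k\Lambda_{n,k}=D(x_n)\setminus\{x_n\}$, it contains $\mathcal B:=\sigma(\nabla h|_{(\V\setminus D(x_n))\cup\{x_n\}})$, which is independent of $\nabla h|_{D^{n+1}(x_n)}$ because $\mu=\mu_\phi$ is a flow measure with independent geometric edge gradients (Theorem~\ref{thm:classification}) and $\nabla h|_{D^{n+1}(x_n)}$ depends only on the gradients strictly inside $D(x_n)$. The rest of $\mathcal A_\infty$ is a ``boundary at infinity'' below $x_n$, carried by $h(z)-h(x_n)$ for deep descendants $z$ of $x_n$, and here I would repeat Lemma~\ref{lem:definetti}: conditioning further on the within-generation gradients $\nabla h|_{D_j(x_n)}$ and passing to the minimal increments $X_j:=\min_{yz\in E_j(x_n)}(h(z)-h(y))$, the reasoning there shows that under $\mu$ the sequence $(X_j)_{j\ge1}$ is i.i.d.\ $\Geom(\alpha)$, independent of $\nabla h|_{\V\setminus D(x_n)}$, with $\alpha$ not depending on the conditioning; since $\nabla h|_{D^{n+1}(x_n)}$ is a deterministic function of the within-generation data and of $X_1,\dots,X_n$ only, and the tail of $(X_j)_{j\ge1}$ is $\mu$-trivial by extremality, this ``deep'' part of $\mathcal A_\infty$ decouples from $\nabla h|_{D^{n+1}(x_n)}$. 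Together with independence from $\mathcal B$ this yields $\nabla h|_{D^{n+1}(x_n)}\perp_\mu\mathcal A_\infty$.

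The step I expect to be the main obstacle is this last one. The point is that for finite $k$ the variable $h(z)-h(x_n)$ with $z$ deep encodes a \emph{noisy} version of the early-generation gradients, which include those determining $F$, so each individual $\mathcal A_k$ does correlate with $F$ and one must show these correlations wash out in the limit $k\to\infty$. I would carry this out through the exchangeable structure above: under $\nu_k^b$, conditionally on the within-generation gradients, $(X_1,\dots,X_{n+k})$ is uniform on $\{\sum_j x_j=A(b)\}$ in $\Z_{\ge0}^{n+k}$ for an appropriate $A(b)$, so a quantitative de Finetti estimate shows that the law of $(X_1,\dots,X_n)$ is, as $n+k\to\infty$, close to a mixture of geometric products, and the constraint pins the mixing parameter down to mean $A(b)/(n+k)$. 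The delicate computation is then to identify $\lim_k A(b)/(n+k)$ with the deterministic mean of $\Geom(\alpha)$ for $\mu$-a.e.\ $b$ --- an equivalence-of-ensembles statement relating the microcanonical constraint enforced by $\nu_k^b$ to the canonical product law of $\mu$ --- which one does using the DLR consistency $\mu=\int\nu_k^b(\blank)\,d\mu(b)$ and the triviality of the increment tail under the extremal $\mu$.
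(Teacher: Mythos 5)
Your proposal is correct and, at its core, runs on the same engine as the paper's proof: both rely on the exchangeability and conditional uniformity of the minimal-increment sequences $(X_\ell)$ under the quenched measure, on the concentration of their empirical mean at a deterministic value forced by extremality of $\mu$, and on an equivalence-of-ensembles step converting the uniform law on $\{\textstyle\sum_j x_j=A\}$ into i.i.d.\ geometrics. Your backward-martingale reformulation of the claim as the independence statement $\nabla h|_{D^{n+1}(x_n)}\perp_\mu\mathcal A_\infty$ is a clean way of making explicit what the paper leaves implicit, but it does not change the substance of the argument.
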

    
\begin{proof}[Proof of the claim]\renewcommand{\qedsymbol}{}
    For each fixed vertex $x\in D^{n+1}(x_n)$,
      introduce the sequence of random variables $(X_\ell)_{\ell\geq 0}$ as
      above.
      Note that if $x$ is at distance $r\ge0$ from $x_n$, then
      under $\gamma_{\Lambda_{n,k}}(\blank,b)$, the random variable $X_\ell$ is deterministic
      whenever $\ell>n+k-r$.
      Each of these sequences $(X_0,\dots,X_{n+k-r})$ has the same
       exchangeability and local uniformity properties as in the proof of Lemma~\ref{lem:definetti}.
       In addition we know that, under the unconditioned law $\mu$,
      the empirical average of $X_0,\dots,X_\ell$ tends almost surely to a constant
      $\alpha$ as $\ell\to\infty$. Since $\mu$ is extremal, it follows
      that for $\mu$-almost every $b$, the quenched law of
      the empirical average of
      $X_0,\dots,X_{n+k-r}$ under the conditional measure $\gamma_{\Lambda_{n,k}}(\cdot,b)$ tends as $k\to\infty$ to a
      Dirac mass at $\alpha$.  This implies that for every $x\in D^{n+1}(x_n)$, the quenched law of $(X_\ell)_{\ell\geq 0}$
      converges almost surely to the law of a sequence of i.i.d.\ Geom($\alpha$), independent of $\nabla h|_{\mathbb V\setminus D(x)}$.
      This readily implies the claim.
\end{proof}

    In turn, the claim
    implies that
    \begin{multline}
        \lim_{k\to\infty}
        \int\Var_{\gamma_{\Lambda_{n,k}}(\blank,b)}[h(x_0)]d\mu(b)
        \geq
        \Var_\mu[h(x_n)-h(x_0)]
        \\=
        \sum_{m=0}^{n-1}
            \frac{e^{-\phi_{x_mx_{m+1}}}}{(1-e^{-\phi_{x_mx_{m+1}}})^2}
        \geq
        \sum_{m=0}^{n-1}
        e^{-\phi_{x_mx_{m+1}}}
        \to_{n\to\infty}\infty
    \end{multline}
    by divergence of the sum in the localisation criterion. This proves the theorem.
\end{proof}

\section{Monotone functions on finite trees}

This section contains the proof of Theorem~\ref{th:finite_volume}.
Recall that each vertex has $d$ children.
Recall also that $D_n(v)$ denotes the set of $n$-th generation descendants 
of some fixed vertex $v$,
and that $D^{n}(v):=\cup_{0\leq k<n}D_k(v)$.
Let $\gamma_{n,k}$ denote the uniform measure on monotone height functions on the set
$D^{n+1}(v)$, with boundary conditions $h(v)=0$ and $h|_{D_n(v)}\equiv-k$ for some fixed $k\ge0$.

\begin{lemma}
  \label{comblemma}
  Let $z_1,\dots,z_d$ denote the children of $v$. For any $i=1,\dots,d$, we have
  \[
    \gamma_{n,k}(h(z_i)=0)\ge 1-\left(1-\frac1{k+1}\right)^{d^{n-2
      }}.
    \]
\end{lemma}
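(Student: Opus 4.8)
The plan is to reduce the statement to a purely combinatorial inequality about counting monotone functions on finite rooted trees. For $m\ge 0$ let $F_m(a)$ denote the number of monotone functions on the rooted $d$-ary tree of depth $m$ (root at distance $0$, leaves at distance $m$ from the root) that vanish at every leaf and take a value in $\{0,1,\dots,a\}$ at the root; here ``monotone'' means, as throughout the paper, that the value at a parent is at least the value at each child. Peeling off the root gives the recursion $F_m(a)=\sum_{i=0}^a F_{m-1}(i)^d$ for $m\ge1$, with $F_0\equiv1$ and hence $F_1(a)=a+1$. After translating the boundary height $-k$ to $0$, conditioning $\gamma_{n,k}$ on the (pinned) value of $h(v)$ makes the restrictions to the $d$ subtrees rooted at $z_1,\dots,z_d$ independent, each being a depth-$(n-1)$ tree with leaves pinned to $0$ and root value in $\{0,\dots,k\}$; since $\{h(z_i)=0\}$ becomes the event that the $i$-th root takes the value $k$ exactly, counting configurations gives
\[
  \gamma_{n,k}(h(z_i)=0)=\frac{\bigl(F_{n-1}(k)-F_{n-1}(k-1)\bigr)F_{n-1}(k)^{d-1}}{F_{n-1}(k)^{d}}=1-\frac{F_{n-1}(k-1)}{F_{n-1}(k)} ,
\]
so the lemma is equivalent to the estimate $F_{n-1}(k-1)/F_{n-1}(k)\le\bigl(k/(k+1)\bigr)^{d^{n-2}}$ (the case $k=0$ being trivial, as then $h\equiv0$).

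The heart of the argument is the claim that, for every $m\ge1$, the sequence $a\mapsto F_m(a)/(a+1)^{d^{m-1}}$ is non-decreasing in $a\ge0$; applying this with $a=k-1$ and $a=k$ yields exactly the required estimate (with $m=n-1$). I would prove the claim by induction on $m$. The base case $m=1$ is the identity $F_1(a)/(a+1)\equiv1$. For the inductive step, abbreviate $N:=d^{m-1}$ and set $b_i:=F_{m-1}(i)^d$ and $c_i:=(i+1)^N-i^N>0$ (with $0^N:=0$), so that $\sum_{i=0}^a c_i=(a+1)^N$ telescopes and $F_m(a)/(a+1)^N=\bigl(\sum_{i\le a}b_i\bigr)/\bigl(\sum_{i\le a}c_i\bigr)$. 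By the elementary mediant inequality --- if $c_i>0$ and $b_i/c_i$ is non-decreasing, then $a\mapsto(\sum_{i\le a}b_i)/(\sum_{i\le a}c_i)$ is non-decreasing --- it suffices to check $b_{i+1}c_i\ge b_ic_{i+1}$. Raising the inductive hypothesis to the power $d$ gives $b_{i+1}\ge b_i\,(i+2)^N/(i+1)^N$, and inserting this reduces the desired inequality, after cancelling a factor $(i+1)^{-N}$ and rearranging, to $(i+1)^{2N}\ge\bigl(i(i+2)\bigr)^N$, which holds because $(i+1)^2\ge i(i+2)$.

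I expect the main obstacle to be locating the correct inductive invariant. The naive approach --- bounding $\sum_{i=0}^{k-1}F_{m-1}(i)^d\le k\,F_{m-1}(k-1)^d$ inside the recursion --- produces a recursion of the shape $F_m(k)/F_m(k-1)\gtrsim 1+\tfrac1k\bigl(F_{m-1}(k)/F_{m-1}(k-1)\bigr)^d$ which does not close: upon iteration one loses too much, and the resulting bound deteriorates badly for large $n$ (already failing at the second iterate for small $k$). One genuinely needs the sharp comparison $F_m(a)\gtrsim(a+1)^{d^{m-1}}$ propagated uniformly in $a$, and the clean mechanism for carrying it through the tree is the monotonicity of $F_m(a)/(a+1)^{d^{m-1}}$ together with the mediant inequality; granting this, the combinatorial identity for $\gamma_{n,k}(h(z_i)=0)$, the base case, and the closing algebraic inequality $(i+1)^2\ge i(i+2)$ are all routine.
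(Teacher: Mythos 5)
Your proof is correct, but it takes a genuinely different route from the paper's. The paper exploits the exchangeability of the sequence $X_0,\dots,X_{n-1}$ established in the proof of Lemma~\ref{lem:definetti}: since $X_0=h(v)-h(z_i)$ and $X_{n-1}$ (the minimum gradient over the $d^{n-2}$ penultimate-generation vertices in the subtree of $z_i$, each conditionally uniform on an interval of length at most $k+1$) have the same law, the easy estimate $\gamma_{n,k}(X_{n-1}=0)\ge 1-(1-\tfrac1{k+1})^{d^{n-2}}$ near the leaf boundary transfers for free to the root edge. You instead enumerate exactly, via the recursion $F_m(a)=\sum_{i\le a}F_{m-1}(i)^d$, obtaining the identity $\gamma_{n,k}(h(z_i)=0)=1-F_{n-1}(k-1)/F_{n-1}(k)$, and then prove the required ratio bound by showing that $a\mapsto F_m(a)/(a+1)^{d^{m-1}}$ is non-decreasing — an induction that closes cleanly through the mediant inequality and the elementary bound $(i+1)^2\ge i(i+2)$. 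I checked the reduction, the base case, the mediant step, and the final algebra; all are sound (both arguments, yours and the paper's, implicitly assume $n\ge2$, which is the only regime where the stated bound is meaningful). What each approach buys: the paper's proof is three lines once the De Finetti machinery of Section~\ref{sec:complete} is in place, whereas yours is entirely self-contained and elementary, makes no use of exchangeability, and as a bonus produces the exact value of $\gamma_{n,k}(h(z_i)=0)$ rather than only a lower bound. Your identified "correct inductive invariant" is indeed the crux; the paper sidesteps the need for any such invariant by moving the computation to the boundary where it is trivial.
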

\begin{proof}
  Fix $i$.
  Under $\gamma_{n,k}$, the height restricted to the descendants of
  $z_i$ is independent of the height on the rest of the graph. To
  match the notations of Section~\ref{sec:complete}, write $x$ instead
  of $z_i$. For $k=0,\dots,n-1$, define the random variable $X_k$ and
  the set of directed edges $E_k(x)$ as in Section~\ref{sec:complete}.
  We know from the proof of Lemma~\ref{lem:definetti} that the
  sequence $X_0,\dots,X_{n-1}$ is exchangeable. Note that
  $X_0=h(v)-h(x)$ and that $X_{n-1}$ is the minimum height gradient
  between one of the $d^{n-1}$ ancestors of $x$ in $D_{n}(v)$ and
  their parent, which belongs to $D_{n-1}(v)$. Given the height on $D_{n-2}(v)$, the height  on the vertices in $D_{n-1}(v)$ are independent and each one is uniformly distributed in an interval of size at most $k+1$. Therefore,
  \[
    \gamma_{n,k}(X_{n-1}=0)\ge 1-\left(1-\frac1{k+1}\right)^{d^{n-2}}. 
  \]
  Since $X_0$ has the same distribution as $X_{n-1}$, the lemma follows.
\end{proof}

\begin{proof}
  [Proof of Theorem~\ref{th:finite_volume}]
  The measure $\gamma_n$ is nothing but $\gamma_{n,n}$ in the notation of Lemma~\ref{comblemma}.
  From Lemma~\ref{comblemma} we have that with $\gamma_{n,n}$ probability at least
  \[
1-d\left(1-\frac1{n+1}\right)^{d^{n-2}},
\]
we have 
$h(z)=0$ for each of the $d$ children of $v$. Conditioned on this event,
the measure $\gamma_{n,n}$ restricted $D^n(v)\setminus\{v\}$ is just the $d$-fold product measure $\gamma_{n-1,n}^{\otimes d}$. Iterating the argument,
the probability that $h(x)=0$ for every $x\in D^n(v)$ at distance up to $m$ from $v$ has $\gamma_n$-probability at least
\[
1-\sum_{j=1}^m d^j\left(1-\frac1{n+1}\right)^{d^{n-j-1}}.
\]
Given that $d\ge 2$, it is easy to see that this quantity is $o(1)$ as $n\to\infty$,
as soon as
$m\le n-c(d)\log n$ for large enough $c(d)$.
\end{proof}

\section*{Acknowledgments}
The authors thank Utkir Rozikov and the anonymous referees
for a careful reading of the manuscript and many useful suggestions for improvement.

P.\ L.\ was supported by the ERC
grant CriBLaM,
and thanks F.\ T.\ and the TU Wien for their hospitality during
several visits.
F.\ T.\ gratefully acknowledges financial support of the Austria Science
Fund (FWF), Project Number P 35428-N.

\bibliographystyle{amsalpha}
\bibliography{references.bib}

\providecommand{\bysame}{\leavevmode\hbox to3em{\hrulefill}\thinspace}
\providecommand{\MR}{\relax\ifhmode\unskip\space\fi MR }
\providecommand{\MRhref}[2]{%
  \href{http://www.ams.org/mathscinet-getitem?mr=#1}{#2}
}
\providecommand{\href}[2]{#2}
\begin{thebibliography}{DKMO20}

\bibitem[AHPS22]{aizenman2021depinning}
Michael Aizenman, Matan Harel, Ron Peled, and Jacob Shapiro, \emph{Depinning in
  integer-restricted {G}aussian fields and {BKT} phases of two-component spin
  models}, arXiv preprint arXiv:2110.09498v3 (2022).

\bibitem[BFL82]{Bricmont+Fontaine+Lebowitz-1982}
Jean Bricmont, Jean~Raymond Fontaine, and Joel~L. Lebowitz, \emph{Surface
  tension, percolation, and roughening}, Journal of Statistical Physics
  \textbf{29} (1982), no.~2, 193--203.

\bibitem[BHM00]{benjamini2000random}
Itai Benjamini, Olle H{\"a}ggstr{\"o}m, and Elchanan Mossel, \emph{On random
  graph homomorphisms into {$\mathbb Z$}}, Journal of Combinatorial Theory,
  Series B \textbf{78} (2000), no.~1, 86--114.

\bibitem[BS00]{benjamini2000upper}
Itai Benjamini and Gideon Schechtman, \emph{Upper bounds on the height
  difference of the {G}aussian random field and the range of random graph
  homomorphisms into {$\mathbb Z$}}, Random Structures and Algorithms
  \textbf{17} (2000), no.~1, 20--25.

\bibitem[BW82]{BW}
R.~Brandenberger and C.E. Wayne, \emph{Decay of correlations in surface
  models}, Journal of Statistical Physics \textbf{27} (1982), no.~3, 425--440.

\bibitem[BYY07]{benjamini2007random}
Itai Benjamini, Ariel Yadin, and Amir Yehudayoff, \emph{Random
  graph-homomorphisms and logarithmic degree}, Electronic Journal of
  Probability \textbf{12} (2007), 926--950.

\bibitem[DGPS20]{duminil2020macroscopic}
Hugo {Duminil-Copin}, Alexander Glazman, Ron Peled, and Yinon Spinka,
  \emph{Macroscopic loops in the loop ${O}(n) $ model at {N}ienhuis' critical
  point}, Journal of the European Mathematical Society \textbf{23} (2020),
  no.~1, 315--347.

\bibitem[DKMO20]{duminil2020delocalization}
Hugo {Duminil-Copin}, Alex Karrila, Ioan Manolescu, and Mendes Oulamara,
  \emph{Delocalization of the height function of the six-vertex model}, arXiv
  preprint arXiv:2012.13750 (2020).

\bibitem[FILS78]{Frohlich+Israel+Lieb+Simon-1978}
J{\"u}rg Fr{\"o}hlich, Robert Israel, Elliot~H. Lieb, and Barry Simon,
  \emph{Phase transitions and reflection positivity. {I}. {G}eneral theory and
  long range lattice models}, Communications in Mathematical Physics
  \textbf{62} (1978), no.~1, 1--34.

\bibitem[FS81]{frohlich1981kosterlitz}
J{\"u}rg Fr{\"o}hlich and Thomas Spencer, \emph{The {K}osterlitz-{T}houless
  transition in two-dimensional abelian spin systems and the {C}oulomb gas},
  Communications in Mathematical Physics \textbf{81} (1981), no.~4, 527--602.

\bibitem[FSS76]{Frohlich+Simon+Spencer-1976}
J.~Fr\"{o}hlich, B.~Simon, and T.~Spencer, \emph{Infrared bounds, phase
  transitions and continuous symmetry breaking}, Communications in Mathematical
  Physics \textbf{50} (1976), no.~1, 79--85.

\bibitem[Geo11]{G11}
Hans-Otto Georgii, \emph{Gibbs measures and phase transitions}, De Gruyter
  Studies in Mathematics, vol.~9, Walter de Gruyter, 2nd ed., 2011.

\bibitem[GM21]{glazman2021uniform}
Alexander Glazman and Ioan Manolescu, \emph{Uniform {L}ipschitz functions on
  the triangular lattice have logarithmic variations}, Communications in
  Mathematical Physics \textbf{381} (2021), no.~3, 1153--1221.

\bibitem[GMT20]{giuliani2020non}
Alessandro Giuliani, Vieri Mastropietro, and Fabio~Lucio Toninelli,
  \emph{Non-integrable dimers: Universal fluctuations of tilted height
  profiles}, Communications in Mathematical Physics \textbf{377} (2020), no.~3,
  1883--1959.

\bibitem[HK21a]{henning2021coexistence}
Florian Henning and Christof K{\"u}lske, \emph{Coexistence of localized {G}ibbs
  measures and delocalized gradient {G}ibbs measures on trees}, The Annals of
  Applied Probability \textbf{31} (2021), no.~5, 2284--2310.

\bibitem[HK21b]{henning2021existence}
\bysame, \emph{Existence of gradient {G}ibbs measures on regular trees which
  are not translation invariant}, arXiv preprint arXiv:2102.11899 (2021).

\bibitem[HKLR19]{henning2019gradient}
Florian Henning, Christof K{\"u}lske, Arnaud {Le Ny}, and Utkir~A Rozikov,
  \emph{Gradient {G}ibbs measures for the {SOS} model with countable values on
  a {C}ayley tree}, Electronic Journal of Probability \textbf{24} (2019),
  1--23.

\bibitem[Ken01]{kenyon2001dominos}
Richard Kenyon, \emph{Dominos and the {G}aussian free field}, The Annals of
  Probability (2001), 1128--1137.

\bibitem[Lam22]{lammers2022height}
Piet Lammers, \emph{Height function delocalisation on cubic planar graphs},
  Probability Theory and Related Fields \textbf{182} (2022), no.~1, 531--550.

\bibitem[LO23]{lammers2021delocalisation}
Piet Lammers and S{\'e}bastien Ott, \emph{Delocalisation and
  absolute-value-{FKG} in the solid-on-solid model}, Probability Theory and
  Related Fields (2023).

\bibitem[LT20]{LamTas}
Piet Lammers and Martin Tassy, \emph{Macroscopic behavior of {L}ipschitz random
  surfaces}, arXiv preprint arXiv:2004.15025 (2020).

\bibitem[MP15]{milos2015delocalization}
Piotr Mi{\l}o{\'s} and Ron Peled, \emph{Delocalization of two-dimensional
  random surfaces with hard-core constraints}, Communications in Mathematical
  Physics \textbf{340} (2015), no.~1, 1--46.

\bibitem[Pel17]{peled2017high}
Ron Peled, \emph{High-dimensional {L}ipschitz functions are typically flat},
  The Annals of Probability \textbf{45} (2017), no.~3, 1351--1447.

\bibitem[PSY13a]{peled2013grounded}
Ron Peled, Wojciech Samotij, and Amir Yehudayoff, \emph{Grounded {L}ipschitz
  functions on trees are typically flat}, Electronic Communications in
  Probability \textbf{18} (2013), 1--9.

\bibitem[PSY13b]{peled2013lipschitz}
\bysame, \emph{Lipschitz functions on expanders are typically flat},
  Combinatorics, Probability and Computing \textbf{22} (2013), no.~4, 566--591.

\bibitem[She05]{SHEFFIELD}
Scott Sheffield, \emph{Random surfaces}, Ast{\'e}risque \textbf{304} (2005).

\bibitem[SW14]{saumard2014log}
Adrien Saumard and Jon~A Wellner, \emph{Log-concavity and strong log-concavity:
  a review}, Statistics surveys \textbf{8} (2014), 45--114.

\bibitem[Vel06]{velenik2006localization}
Yvan Velenik, \emph{Localization and delocalization of random interfaces},
  Probability Surveys \textbf{3} (2006), 112--169.

\end{thebibliography}

\end{document}